\definecolor{cobalt}{RGB}{61,99,181}
\newtheorem{thm}{Theorem}[section]
\newtheorem{defi}[thm]{Definition}
\newtheorem{lem}[thm]{Lemma}
\newtheorem{prop}[thm]{Proposition}
\newtheorem{ques}[thm]{Question}
\numberwithin{equation}{section}
\date{\today}
\newcommand{\Rmnum}[1]{\expandafter\@slowromancap\romannumeral #1@}
\begin{document}

\title[The Douglas question]{The Douglas question on the Bergman and Fock spaces}

\author[Chen]{Jian-hua Chen}
\address{School of  Mathematical  and  Computational Science, Hunan  University  of  Science  and  Technology,
Xiangtan, Hunan, 411201, P. R. China}
\email{cjhmath@mail.ustc.edu.cn}

\author[Leng]{Qianrui Leng}
\address{College of Mathematics and Statistics, Chongqing University, Chongqing, 401331, P. R. China}
\email{17788619117@163.com}

\author[Zhao]{Xianfeng Zhao}
\address{College of Mathematics and Statistics, Chongqing University, Chongqing, 401331, P. R. China}
\email{xianfengzhao@cqu.edu.cn}

\keywords{ Toeplitz operator; invertibility; Douglas question; positive measure; reverse Carleson measure}

\subjclass[2010]{47B35.}

\begin{abstract}
Let $\mu$ be a  positive Borel measure and $T_\mu$ be the bounded Toeplitz operator induced by $\mu$ on the Bergman or Fock space. In this paper, we mainly investigate the invertibility of the Toeplitz operator $T_\mu$ and the Douglas question on the Bergman and Fock spaces.
In the  Bergman-space setting, we obtain several necessary and sufficient conditions for the invertibility of $T_\mu$ in terms of  the Berezin transform of $\mu$ and the reverse Carleson condition in two classical cases:
(1) $\mu$ is absolutely continuous with respect to the normalized area measure on the open unit disk $\mathbb D$;
(2) $\mu$  is the pull-back measure of the normalized area measure  under an analytic self-mapping  of $\mathbb D$.
Nonetheless, we show that there exists a Carleson measure for the Bergman space such that its Berezin transform is bounded below but the corresponding Toeplitz operator is not invertible. On the Fock space, we show that $T_\mu$ is invertible if and only if $\mu$ is a reverse Carleson measure, but the invertibility of $T_\mu$ is not completely determined by the invertibility of the Berezin transform of $\mu$. These suggest that the answers to the Douglas question for Toeplitz operators induced by positive measures on the Bergman and Fock spaces are both negative in general cases.
\end{abstract} \maketitle

\tableofcontents

\section{Introduction}\label{In}

Let us  begin with certain notation and basic knowledge about the  Bergman
space, the Fock space and Toeplitz operators on them.  Let $dA(z)=\frac{1}{\pi}dxdy$ denote the normalized Lebesgue area measure on the complex plane $\mathbb{C}$. $L^2(\mathbb D, dA)$ denotes the
space of the Lebesgue measurable functions on the open unit disk $\mathbb D=\{z\in \mathbb C: |z|<1\}$ with the following norm:
$$\|f\|_{L^2 (\mathbb{D}, d A)}=\bigg[\int_{\mathbb D} |f(z)|^2 dA(z)\bigg]^{\frac{1}{2}}<+\infty.$$
The Bergman space $L_a^2$ is the closed subspace of $L^2(\mathbb D, dA)$ consisting of analytic functions on $\mathbb D$, which is a Hilbert space with the inner product
$$\langle f, g\rangle_{L_a^2}=\int_{\mathbb D} f(z)\overline{g(z)}dA(z).$$
Recall that $L_a^2$  is a reproducing kernel Hilbert space and the reproducing kernel at $z\in \mathbb D$ is given by
$$K_z(w)=\frac{1}{(1-\overline{z}w)^2},  \ \ \ \  w\in \mathbb D.$$
The unit vector  $k_z=\frac{K_z}{\|K_z\|_{L_a^2}}$ is called the normalized reproducing kernel for $L_a^2$.

Letting $P$ be the orthogonal projection from $L^2(\mathbb D, dA)$ onto $L_a^2$, then the integral representation of $P$ is given by
$$Pf(z)=\int_{\mathbb D}f(w)\overline{K_{z}(w)}dA(w)=\int_{\mathbb D}\frac{f(w)}{(1-z\overline{w})^2}dA(w), \ \ \ \ f\in L^2(\mathbb D, dA).$$
Given a function $\varphi\in L^\infty(\mathbb D, dA)$, the Toeplitz operator $T_\varphi$ with symbol $\varphi$ on the Bergman space  $L_a^2$ is defined by
$$T_\varphi f=P(\varphi f), \ \ \ \ f\in L_a^2.$$
Using the integral representation of $P$, we can write $T_\varphi$ as an integral operator on the Bergman space:
$$T_{\varphi} f(z)=\int_{\mathbb D}\frac{\varphi(w)f(w)}{(1-z\overline{w})^2}dA(w),$$
where $f\in L_a^2$ and $z\in \mathbb D$.

Toeplitz operators can also be defined for finite Borel measures on $\mathbb D$. In fact, if $\mu$ is a finite (complex) Borel measure on the open unit disk $\mathbb D$, then the Toeplitz operator induced by the measure $\mu$ is defined by
$$T_\mu f(z)=\int_{\mathbb D}\frac{f(w)}{(1-z\overline{w})^2}d\mu(w), \ \ \ f\in L_a^2.$$
In this case,  the Berezin transform of $\mu$ (or $T_\mu$) is the function defined by
 $$\widetilde{\mu}(z)=\int_{\mathbb D}|k_z(w)|^2d\mu(w),\ \ \ \ z\in \mathbb D.$$
If $d\mu=\varphi dA$ for a function  $\varphi\in L^1(\mathbb D, dA)$, then we simply write $T_{\mu}=T_{\varphi}$ and
$$\widetilde{\mu}(z)=\int_{\mathbb D}\varphi(w)|k_z(w)|^2dA(w)$$
is usually denoted by $\widetilde{\varphi}$, which is also called the Berezin transform of $\varphi$.
In particular, when $\varphi \in L^ {\infty} (\mathbb{D}, dA)$, we return to the classical Toeplitz operator defined in the preceding paragraph.

We now turn to the introduction of the Fock space and Toeplitz operators on such space.  Let $$d\lambda(z)=\frac{1}{2}\mathrm{e}^{-\frac{|z|^2}{2}}dA(z)$$  be the Gaussian measure on the complex plane $\mathbb C$. The Fock space $\mathcal F^2$ consists of all entire functions $f$ in $L^2(\mathbb C, d\lambda)$. One can show easily that $\mathcal F^2$ is a closed subspace of $L^2(\mathbb C, d\lambda)$. Consequently, the Fock space $\mathcal F^2$ is a Hilbert space with the inner product:
$$\langle f, g\rangle_{\mathcal F^2}=\int_{\mathbb C}f(z)\overline{g(z)}d\lambda(z).$$
Moreover, $\mathcal F^2$  is also  a reproducing kernel Hilbert space and the reproducing kernel at $z\in \mathbb C$ is given by
$$K_z^{(\mathcal F^2)}(w)=\mathrm{e}^{\frac{\overline{z}w}{2}},  \ \ \ \  w\in \mathbb C.$$

Similar to the  Bergman space, the orthogonal projection $P^ {(\mathcal{F}^ 2)}$ from $L^ 2 (\mathbb{C}, d \lambda)$ onto $\mathcal{F}^ 2$ is the following  integral operator:
\begin{align*}
	P^ {(\mathcal{F}^ 2)} f (z) = \int_{\mathbb{C}} f(w) \overline{K_z^ {(\mathcal{F}^ 2)} (w)} d \lambda(w) = \int_{\mathbb{C}} f(w) \mathrm{e}^ {\frac{z \overline{w}}{2}} d A (w), \ \ \ \ f \in L^ 2 (\mathbb{C}, d \lambda).
\end{align*}
Given a function $\varphi \in L^ {\infty} (\mathbb{C}, dA)$, the Toeplitz operator $T_{\varphi}$ with symbol $\varphi$ on the Fock space is defined by
\begin{align*}
	T_{\varphi} f = P^ {(\mathcal{F}^ 2)} (\varphi f), \ \ \ \ f \in \mathcal{F}^ 2,
\end{align*}
which can be written as
\begin{align*}
	T_{\varphi} f (z) = \int_{\mathbb{C}} f(w) \varphi (w) \mathrm{e}^ {\frac{z \overline{w}}{2}} d \lambda (w),
\end{align*}
where $f \in \mathcal{F}^ 2$ and $z \in \mathbb{C}$.

Let $\mu$ be a complex Borel measure on $\mathbb C$ satisfying the\emph{ condition (M)}, that is,
\begin{align}
\int_{\mathbb C} \big|K_z^{(\mathcal F^2)}(w)\big|^2e^{-\frac{|w|^2}{2}}d|\mu|(w)<+\infty
\end{align}
for all $z\in \mathbb C$, where $|\mu|$ denotes the total variation of $\mu$. Under this assumption, the Toeplitz operator induced by  $\mu$  can be defined on a dense subspace of $\mathcal{F}^ 2$:
$$T_\mu f(z)=\frac{1}{2}\int_{\mathbb C} f(w)\overline{K_z^{(\mathcal F^2)}(w)}\mathrm{e}^{-\frac{|w|^2}{2}}d\mu(w), \ \ \ \  z\in \mathbb C.$$
Condition (M) also guarantees that the Berezin transform of $\mu$ is well-defined:
$$\widetilde{\mu}(z)=\frac{1}{2}\int_{\mathbb C} |k_z^{(\mathcal F^2)}(w)|^2 \mathrm{e}^ {-\frac{|w|^ 2}{2}} d \mu(w) = \frac{1}{2} \int_{\mathbb{C}} \mathrm{e}^ {-\frac{|z - w|^ 2}{2}} d \mu (w), $$
where
$$k_z^{(\mathcal F^2)}(w)=\frac{K_z^{(\mathcal F^2)}(w)}{\|K_z^{(\mathcal F^2)}\|_{\mathcal F^2}}=\mathrm{e}^{\frac{w\overline{z}}{2}-\frac{|z|^2}{4}} \ \  \ \  \  (z, w\in\mathbb C)$$
is the normalized reproducing kernel for $\mathcal F^2$.  When $d\mu=\varphi dA$ with $\varphi \in L^ {\infty} (\mathbb{C}, d \lambda)$, we get back to $T_\varphi$, and in this case $\widetilde{\mu}$ is again denoted by $\widetilde{\varphi}$, which is also called the Berezin transform of $\varphi$. Notice that the dual use of the notation ``$T_\varphi$" and ``$T_{\mu}$'' for Toeplitz operators on the Bergman
and Fock spaces should cause no confusion.

For more knowledge about the Bergman and Fock spaces, the Berezin transform and Toeplitz operators on such spaces,  one can consult for instance \cite{Ax}, \cite{Zhu}, \cite{AZ} and \cite{ZZ1}.
Usually, Toeplitz operators on the Bergman space  are called  ``Bergman-Toeplitz operators" and  Toeplitz operators on the Fock space  are called  ``Fock-Toeplitz operators" for short.

Toeplitz operators induced by measures were investigated by many authors in the past few decades. However, Luecking \cite{Lue1} is the first to consider Toeplitz operators on the Bergman space with measures as symbols. Later, using the Berezin transform and \emph{Carleson measures} (which will be introduced in the next section), Zhu characterized the boundedness and compactness of Toeplitz operators induced by positive measures on  weighted Bergman spaces of bounded symmetric domains, see \cite{Zhu2} and \cite{Zhu}. The characterization of Schatten class Toeplitz operators with measures as symbols on the  Bergman space of the unit ball was obtained by Zhu \cite{Zhu3, Zhu}. In 2008, Luecking \cite{Lue2} studied the finite rank Toeplitz operator induced by a complex  measure on the Bergman space.

An especially important but quite difficult problem in the Toeplitz operator theory is to determine the invertibility of Toeplitz operators in terms of the properties of their symbols. The original Douglas question is an influential open question concerning the invertibility problem of Hardy-Toeplitz operators, which says that: \emph{if the harmonic extension (poisson integral) of $\varphi$ is bounded below on the unit disk $\mathbb D$, then is the Toeplitz operator with symbol $\varphi$  invertible on the Hardy  space?} The original Douglas question was studied by many authors, for instance, Tolokonnikov \cite{Tol}, Nikolskii \cite{Nik}, Wolff \cite{Wol} and  Karaev \cite{Kar}. Observe that the harmonic extension of a bounded function $\varphi$ defined on $\partial \mathbb D$ is equal to the Berezin transform of the Toeplitz operator with symbol $\varphi$ on the Hardy space.
This leads to the following natural question for Bergman-Toeplitz operators analogous to the Douglas question on the Hardy space:

\begin{ques}\label{Douglas}
Let $\varphi$ be in $L^\infty (\mathbb D, dA)$. Then is $T_{\varphi}$ invertible  on the Bergman space $L_a^2$ if the Berezin transform $|\widetilde{\varphi}(z)| \geqslant \delta$ for some positive constant
 $\delta$
 and for all $z\in \mathbb D$?
 \end{ques}

The above question is answered positively  for Bergman-Toeplitz operators with  bounded (conjugate) analytic  symbols,
and real harmonic symbols in \cite{MS}. In \cite{Lue}, by constructing a reverse Carleson measure for the Bergman space $L_a^2$, Luecking established a number of necessary and sufficient conditions for Toeplitz operators with bounded positive symbols to be invertible on the Bergman space. Based on Luecking's work, Zhao and Zheng  showed that the answer to Question \ref{Douglas} is also affirmative for Bergman-Toeplitz operators with bounded positive symbols \cite{ZZ}. \v{C}u\v{c}kovi\'{c} and Vasaturo \cite{Cu} obtained a necessary and sufficient condition for the invertibility of Toeplitz operators whose symbols are averaging functions of  Carleson measures for the (standard) weighted Bergman space.
Recently, Guo, Zhao and Zheng \cite{ZZ2, GZZ}
provided some partial answers to this question for Bergman-Toeplitz operators with some harmonic polynomials. However, Question \ref{Douglas} remains open for general harmonic symbols.

%
%

In the present  paper, we are interested in the invertibility problem of Bergman-Toeplitz  and Fock-Toeplitz operators  with positive measures as  symbols.  In particular, we will focus on the Douglas question for Toeplitz operators induced by positive measures on the Bergman and Fock spaces:
\begin{ques}\label{Q2}
 Suppose that $\mu$ is a finite  positive Borel measure on the unit disk $\mathbb D$ and $T_{\mu}$ is bounded on the Bergman space $L_a^2$. Then is $T_\mu$ invertible on $L_a^2$ if the Berezin transform of $\mu$ is bounded below on $\mathbb D$?
\end{ques}

\begin{ques}\label{Q3}
 Suppose that $\mu$ is a  positive Borel measure on the complex plane $\mathbb C$ and $T_{\mu}$ is bounded on the Fock space $\mathcal F^2$. Then is $T_\mu$ invertible on $\mathcal F^2$ if the Berezin transform of $\mu$ is bounded below on $\mathbb C$?
\end{ques}

As mentioned above, the answer to Question \ref{Q2} is positive in the case that $d\mu=\varphi dA$, where $\varphi$ is a nonnegative function in $L^\infty(\mathbb D, dA)$.  Apart from this,  very little is known about the invertibility of Toeplitz operators induced by measures, even if the measure is positive. Using the Berezin transform and the reverse Carleson condition,  in Section \ref{DB} we establish a number of necessary and sufficient conditions for Toeplitz operators with positive measures as symbols to be invertible on the Bergman space in two classical cases. More precisely, in the case that $\mu$  is absolutely continuous with respect to the Lebesgue measure $dA$ and the Radon-Nikodym derivative of $d\mu$ with respect to $dA$ is bounded, we show in Theorem \ref{M1} that $T_\mu$ is invertible if and only if the Berezin transform of  $\mu$ is invertible in $L^\infty(\mathbb D, dA)$, if and only if $\mu$ satisfies the \emph{reverse Carleson condition for $L_a^2$} (see (\ref{RCC}) for the definition). For the case that $\mu$ is the pull-back measure of $dA$ under an analytic self-mapping of $\mathbb D$, we show in Theorem \ref{M2} that the invertibility of $T_\mu$ and the Berezin transform of  $\mu$ are equivalent. Note that the Berezin transform of $\mu$ is bounded below on $\mathbb D$ provided that $T_\mu$ is bounded below on the Bergman space.
 Combining this with the conclusions in Theorems \ref{M1} and \ref{M2}, one may conjecture that the answer to Question \ref{Q2} is affirmative.  Surprisingly,  we will show that there exists a finite positive Borel measure on $\mathbb D$ such that the Berezin transform of $\mu$ is invertible in $L^\infty(\mathbb D, dA)$, but the corresponding Toeplitz operator $T_\mu$ is not invertible on the Bergman space (see Theorem \ref{CE}).  This implies that the answer to Question \ref{Q2} is negative in general.

In the setting of the Fock space, Question \ref{Q3} was answered positively by Wang and Zhao \cite{WZ} when $d\mu=\varphi dA$, where $\varphi$ is a bounded positive function on the complex  plane. However,  the answer to this question is unknown for general positive measures.
In the final section, we investigate the invertibility  of Fock-Toeplitz operators induced by positive measures  and discuss the analogues of Theorems \ref{M1} and \ref{CE} in the Fock-space setting. Specifically, we establish in Theorem \ref{MF} that the Fock-Toeplitz operator  $T_\mu$  is invertible if and only if $\mu$ is a reverse Carleson measure for the Fock space, where $\mu$ is a positive Borel measure on the complex plane $\mathbb C$. Based on this result, in Theorem \ref{FM1} we establish several equivalent characterizations for $T_\mu$ to be invertible on the Fock space via the Berezin transform and the \emph{reverse Carleson condition for $\mathcal F^2$} (see (\ref{RCCF}) for the definition).  In addition, we  show in Theorem  \ref{TF} that there exists a positive Borel  measure $\nu$ on $\mathbb C$ such that $T_\nu$ is not invertible on the Fock space but the Berezin transform of $\nu$ is  bounded below on $\mathbb C$. Therefore, the answer to the Douglas question for Toeplitz operators induced by positive measures on the Fock space is also negative.

\section{Preliminaries}\label{2}

Before studying the invertibility of Toeplitz operators induced by positive measures on the Bergman and Fock spaces, we need to fix some notation and review certain necessary preliminary knowledge first.

\subsection{Necessary knowledge for the Bergman space}
\
\newline
\indent

For $w\in \mathbb D$, let $\varphi_w$  be the M\"{o}bius transform defined by
$$\varphi_w(z)=\frac{w-z}{1-\overline{w}z},  \ \ \  \ z\in \mathbb D.$$
Note that the function $\varphi_w$ is a one-to-one analytic mapping of $\mathbb D$ onto $\mathbb D$.  Let $w$ and $z$ be two points in  $\mathbb D$. The pseudo-hyperbolic distance $\rho(w, z)$ and the Bergman distance $\beta (w, z)$ between $w$ and $z$ are defined by
$$\rho(z, w)=|\varphi_w(z)| \ \ \ \ \  \textrm{and} \ \ \ \ \ \beta (z, w) = \tanh^ {-1} {\rho (z, w)},$$
respectively.  An application of  Schwarz's lemma shows that $\rho$ is a metric on the unit disk $\mathbb D$, see \cite[Lemma 1.4]{Gar} if necessary. Thus, $\beta$ is also a metric on $\mathbb D$.
In addition, the metrics $\rho$ and $\beta$ are both invariant under the action of  M\"{o}bius transforms.

Let $w\in \mathbb C$ and $r > 0$.  In the following, we use $B(w, r)$  to denote the  Euclidean disk with center $w$ and radius $r$:
$$B(w, r)=\{z\in \mathbb C: |w - z| < r\}.$$
The pseudo-hyperbolic disk $E(w, r)$ with center $w \in \mathbb D$ and radius $r \in (0, 1)$ is defined by
$$E(w, r)=\{z\in \mathbb D: \rho(w, z)<r\}.$$
Moreover, the Bergman disk $D(w, r)$ with center $w \in \mathbb D$ and radius $r$ is defined by
$$D(w, r)=\{z\in \mathbb D: \beta(w, z)<r\}. $$

From \cite[Proposition 4.4]{Zhu}, the Lebesgue measure of $E(z, r)$ is comparable to the Lebesgue measure of $E(w, r)$ when $\rho(z, w)$ stays bounded:
\begin{align}\label{zw}
  A(E(z, r)) \asymp A(E(w, r))
  \end{align}
 when $\rho (z, w) < c$, where $c \in (0, 1)$ is a constant. Here and in the following, the notation $A \lesssim B$ for two nonnegative quantities $A$ and $B$ means that there is an inessential  positive constant $C_1$ such that $A\leqslant C_1B$. Similarly, the notation $A \gtrsim  B$  stands for that there is an inessential  positive constant $C_2$ such that $A\geqslant C_2B$. By $A \asymp B$ for two nonnegative quantities $A$ and $B$, we mean that $A \lesssim B$ and $A \gtrsim  B$.

Let $\mu$ be a finite positive Borel measure on $\mathbb D$. We say that $\mu$ is a \emph{Carleson measure for} $L_a^2$ if there exists an absolute  constant $C>0$ such that
$$\int_{\mathbb D}|f(z)|^2d\mu(z)\leqslant C \int_{\mathbb D}|f(z)|^2dA(z)$$
for all $f\in L_a^2$; We say  that $\mu$ is a \emph{reverse Carleson measure for} $L_a^ 2$ if there exists an absolute constant $C > 0$ such that
$$\int_{\mathbb D}|f(z)|^2d\mu(z)\geqslant C \int_{\mathbb D}|f(z)|^2dA(z)$$
for all $f \in L_a^ 2$. Moreover, we call that $\mu$ satisfies the \emph{reverse Carleson condition} if there is  an $r \in (0, 1)$ and a positive constant $C$ so that
\begin{align} \label{RCC}
	C^ {-1} \leqslant \frac{\mu (E(z, r))}{A(E(z, r))} \leqslant C
\end{align}
for all $z \in \mathbb{D}$.

It is well-known that $\mu$ is a Carleson measure for $L_a^2$ if and only if for every radius $r \in (0, 1)$, we can find a positive constant $C$ depending only on $r$ such that
 $$\frac{\mu(E(z, r))}{A(E(z, r))}\leqslant C$$
 for all $z\in \mathbb D$, see \cite{Zhu} if needed. Using the reverse Carleson condition for $L_a^2$, Ghatage and Tjani \cite{GT} provided some necessary and sufficient conditions for a composition operator to have closed range on the Bergman space.


Let  $G$ be a measurable subset of $\mathbb D$ and $\chi_{G}$ be the characteristic function of $G$. It was established by Luecking \cite{Lue} that $\chi_G dA$ is a reverse Carleson measure for $L_a^2$ if and only if there is  a $\delta>0$ and an $\eta\in (0, 1)$ such that
$$A(G\cap E(a, \eta))>\delta A(E(a, \eta))$$
for all $a\in \mathbb D$,  which is equivalent to that the measure $\chi_G dA$ satisfies the reverse Carleson condition for $L_a^2$.
Moreover, Ghatage and Tjani  \cite{GT} showed that the Carleson measure for the Bergman space has the following important property:
\begin{lem}\label{Lim}
\emph{(\cite[Proposition 4.1]{GT}) }If $\mu$ is  a  Carleson measure for the Bergman space, then
$$\lim_{r\rightarrow 1^-}\sup_{z\in \mathbb D} \int_{\mathbb D\backslash E(z, r)}\frac{(1-|z|^2)^2}{|1-\overline{z}w|^4}d\mu(w)=0.$$
\end{lem}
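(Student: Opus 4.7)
The plan is to reduce the estimate against $d\mu$ to the corresponding estimate against $dA$, where an explicit M\"obius change of variables trivializes the claim. Writing the integrand as $|k_z(w)|^2$ with $k_z$ the normalized Bergman reproducing kernel at $z$, the substitution $w = \varphi_z(u)$ is the natural first move: using the identity $1 - \overline{z}\,\varphi_z(u) = (1-|z|^2)/(1-\overline{z}u)$, one checks that $|k_z(w)|^2\,dA(w) = dA(u)$, while the $\varphi_z$-invariance of $\rho$ carries $E(z,r)$ to the Euclidean disk $\{|u|<r\}$. Consequently
\[
\int_{\mathbb D\setminus E(z,r)} |k_z(w)|^2\,dA(w) \;=\; \int_{\{|u|\ge r\}}\,dA(u) \;=\; 1-r^2,
\]
a bound uniform in $z$ that tends to $0$ as $r\to 1^-$. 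This settles the case $d\mu = dA$.

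For a general Carleson measure $\mu$, I would discretize by a pseudo-hyperbolic lattice. Fix a small $s\in(0,1)$ and choose a sequence $\{z_j\}\subset \mathbb D$ with $\mathbb D = \bigcup_j E(z_j, s/2)$ such that the enlarged disks $E(z_j, s)$ have bounded multiplicity of cover, say $N$. Standard pointwise estimates give $|k_z(w)|^2 \asymp |k_z(z_j)|^2$ for $w\in E(z_j, s)$, and \eqref{zw} yields $A(E(z_j, s/2))\asymp A(E(z_j, s))$; meanwhile the Carleson characterization recalled just before the lemma gives $\mu(E(z_j, s))\lesssim A(E(z_j, s))$. Combining these three facts,
\[
\int_{E(z_j, s/2)} |k_z(w)|^2\, d\mu(w) \;\lesssim\; |k_z(z_j)|^2\,A(E(z_j, s)) \;\asymp\; \int_{E(z_j, s)} |k_z(w)|^2\, dA(w).
\]

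Summing over those indices $j$ for which $E(z_j, s/2)\cap(\mathbb D\setminus E(z,r))\ne\emptyset$, and invoking the bounded overlap, I expect to obtain
\[
\int_{\mathbb D\setminus E(z,r)} |k_z(w)|^2\, d\mu(w) \;\lesssim\; N \int_{\mathbb D\setminus E(z, r'')} |k_z(w)|^2\, dA(w) \;=\; N\bigl(1-(r'')^2\bigr),
\]
where the auxiliary radius $r''=r''(r,s)$ satisfies $r''\to 1^-$ as $r\to 1^-$. The relation between $r$ and $r''$ is extracted from the genuine triangle inequality for the Bergman metric $\beta = \tanh^{-1}\rho$: if $w\in E(z_j, s)$ and $\rho(z,w)\ge r$, then $\beta(z,z_j)\ge \beta(z,w)-s$, forcing the relevant centers $z_j$ to lie outside $E(z, r')$ with $r'=\tanh(\tanh^{-1}r - s)$, and a second application shows $\bigcup_j E(z_j, s)$ over these indices sits in $\mathbb D\setminus E(z, r'')$ with $r''=\tanh(\tanh^{-1}r - 2s)$. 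Since the right-hand side is independent of $z$, letting $r\to 1^-$ closes the argument.

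The main obstacle is precisely this radius bookkeeping: I must keep the lattice constant $s$ fixed while verifying that $r''$ still approaches $1$ at a rate depending only on $r$, so that the final bound is genuinely uniform in $z\in\mathbb D$ and not merely pointwise. Once that is arranged, all other ingredients---the M\"obius invariance of $|k_z|^2\,dA$, the comparison of the normalized kernel over a pseudo-hyperbolic disk of fixed radius, the Carleson characterization $\mu(E(\cdot, s))\lesssim A(E(\cdot, s))$, and the existence of a lattice with bounded overlap---are by now standard tools that fit together without surprise.
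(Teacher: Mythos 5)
Your argument is correct, and it is worth noting that the paper does not actually prove this lemma: it is imported verbatim from \cite[Proposition 4.1]{GT}, so there is no in-text proof to compare against. Your route is the standard one and all the pieces fit: the M\"obius substitution $w=\varphi_z(u)$ with $|k_z(w)|^2\,dA(w)=dA(u)$ does give exactly $1-r^2$ for the area-measure case; the lattice decomposition transfers the Carleson bound $\mu(E(\cdot,s))\lesssim A(E(\cdot,s))$ to the kernel integral via the standard comparison $|k_z(w)|\asymp|k_z(z_j)|$ on $E(z_j,s)$; and the bounded-overlap summation reduces the $\mu$-integral over $\mathbb D\setminus E(z,r)$ to the $dA$-integral over $\mathbb D\setminus E(z,r'')$. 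The radius bookkeeping you flag as the main obstacle does close: since $\beta=\tanh^{-1}\rho$ is a genuine metric and $\tanh^{-1}r\to\infty$ as $r\to1^-$, subtracting the two fixed constants $\tanh^{-1}(s/2)$ and $\tanh^{-1}(s)$ still sends $r''\to1^-$, and every implicit constant depends only on $s$, the overlap number $N$, and the Carleson constant of $\mu$, hence the bound is uniform in $z$. The only blemish is notational: for a lattice built with pseudo-hyperbolic radii $s/2$ and $s$, the triangle inequality gives $\beta(z,z_j)\ge\beta(z,w)-\tanh^{-1}(s/2)$ rather than $\beta(z,w)-s$, so your formula $r''=\tanh(\tanh^{-1}r-2s)$ should read $r''=\tanh\bigl(\tanh^{-1}r-\tanh^{-1}(s/2)-\tanh^{-1}(s)\bigr)$; this is immaterial since $s$ is fixed. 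No genuine gap.
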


We end this subsection by introducing the definitions of separated, sampling and  interpolation sequences for the Bergman space, which will be needed later on.  For more information concerning these concepts, one can consult \cite{DS}, \cite{Hor} and \cite{Sei}.

\begin{defi}\label{def}
Let $\{ u_k \}_{k = 1}^ {\infty}$ be a sequence in the open unit disk  $\mathbb D$.
\begin{itemize}
\item  $\{u_k \}_{k = 1}^ {\infty}$ is called \textit{uniformly discrete} or \textit{separated} if there is a constant $\varepsilon \in (0, 1)$ such that $$\rho(u_k, u_j) >\varepsilon \ \ \  \text{for}  \ \ \  k \neq j.$$
In this case, the separation constant of $\{ u_k \}_{k = 1}^ {\infty}$ is defined by
$$ \delta (\{ u_k \}_{k = 1}^ {\infty}) = \inf_{k \neq j} {\rho (a_k, a_j)} .$$
\item We call $\{ u_k \}_{k = 1}^ {\infty}$ an $r$-net if
$$ \mathbb D = \bigcup_{k = 1}^ {\infty} {E(u_k, r)} $$
for some $r \in (0, 1)$.
\item  If $u_k \neq u_j$ for $k \neq j$ and
\begin{align*}
	\int_{\mathbb D} |f(z)|^ 2 d A(z) \asymp \sum_{k = 1}^ {\infty} {(1 - |u_k|^ 2)^ 2 |f(u_k)|^ 2}, \ \ \ \ f \in L_a^ 2,
\end{align*}
then  $\{ u_k \}_{k = 1}^ {\infty}$ is  called a \textit{sampling sequence for} $L_a^2$.\\
\item If $u_k \neq u_j$ for $k \neq j$ and the interpolation problem $f(u_k) = w_k$ \emph{(}$k=1, 2,\cdots$\emph{)} has a solution $f \in L_a^ 2$ whenever
\begin{align*}
	\sum_{k = 1}^ {\infty} {(1 - |u_k|^ 2)^ 2 |w_k|^ 2} < +\infty,
\end{align*}
then we say that $\{ u_k \}_{k = 1}^ {\infty}$  is an \textit{interpolation sequence} for $L_a^2$.
\end{itemize}
\end{defi}

There is an important class of uniformly discrete nets which is called the lattice. A lattice for $L_a^ 2$ is a sequence $\{ u_k \}_{k = 1}^ {\infty}$ of distinct points in $\mathbb{D}$ such that
\begin{align*}
	\mathbb{D} = \bigcup_{k = 1}^ {\infty} {D(u_k, R)} = \bigcup_{k = 1}^ {\infty} {E(u_k, r)}
\end{align*}
and $\beta (u_k, u_j) \geqslant  \frac{R}{2}$ for $k \neq j$, where $R > 0$ and $r = \tanh{(R)}$. Clearly, $\{ u_k \}_{k = 1}^ {\infty}$ is an $r$-net with the separation constant greater than or equal to $\tanh{(\frac{R}{2})}$.

\subsection{Necessary knowledge for the  Fock space}
\
\newline
\indent

Let $\mu$ be a  positive Borel measure $\mu$ on the complex plane  $\mathbb{C}$. Then $\mu$ is called a \emph{Carleson measure for the Fock space $\mathcal{F}^ 2$} if there exists a constant $C>0$ such that
	\begin{align*}
		\int_{\mathbb{C}} |f(w)|^ 2 \mathrm{e}^ {-\frac{|w|^ 2}{2}} d \mu (w) \leqslant  C \int_{\mathbb{C}} |f(w)|^ 2 \mathrm{e}^ {-\frac{|w|^ 2}{2}} d A(w),  \ \  \   \ \ f \in \mathcal{F}^ 2.
	\end{align*}
On the other hand,  $\mu$ is called a \emph{reverse Carleson measure for $\mathcal{F}^ 2$} if there exists a constant $C > 0$ such that
	\begin{align*}
	\int_{\mathbb{C}} |f(w)|^ 2 \mathrm{e}^ {-\frac{|w|^ 2}{2}} d \mu (w)\geqslant  C\int_{\mathbb{C}} |f(w)|^ 2 \mathrm{e}^ {-\frac{|w|^ 2}{2}} d A(w), \ \ \ \ \ f \in \mathcal{F}^ 2.
	\end{align*}

 Like in the case of the Bergman space, we say that a positive Borel measure $\mu$ on $\mathbb{C}$ satisfies  the\emph{ reverse Carleson condition for $\mathcal F^2$} if there exists an $r>0$ and a positive constant $C$ such that
	\begin{align}\label{RCCF}
		C^{-1} \leqslant \mu (B(z, r)) \leqslant  C, \ \ \ \ \  z \in \mathbb{C}.
	\end{align}

There are several concepts for the Fock space  parallel to those in Definition \ref{def}, which can be found in Chapter 4 of the book \cite{Zhu1}.
\begin{defi}
	Let $\{ u_k \}_{k = 1}^ {\infty}$ be a sequence in the complex plane  $\mathbb C$.
	\begin{itemize}
		\item  $\{u_k \}_{k = 1}^ {\infty}$ is called \textit{uniformly discrete} or \textit{separated} if there is a constant $\varepsilon > 0$ such that $$|u_k - u_j| >\varepsilon \ \ \  \text{for}  \ \ \  k \neq j.$$
		\item We call $\{ u_k \}_{k = 1}^ {\infty}$ an $r$-net if
		$$ \mathbb C = \bigcup_{k = 1}^ {\infty} {B(u_k, r)} $$
		for some $r > 0$.
		\item  If $u_k \neq u_j$ for $k \neq j$ and
		\begin{align*}
			\int_{\mathbb D} |f(z)|^ 2 d \lambda(z) \asymp \sum_{k = 1}^ {\infty} {|f(u_k)|^ 2 \mathrm{e}^ {-\frac{|u_k|^ 2}{2}}}
		\end{align*}
	for $f \in \mathcal{F}^2$,	then  $\{ u_k \}_{k = 1}^ {\infty}$ is  called a \textit{sampling sequence} for the Fock space $\mathcal F^2$.
	\end{itemize}
\end{defi}

In the setting of the Fock space, we can also define a lattice. Let $\omega$ be any complex number and $\omega_1$ and $\omega_2$ be any two nonzero complex numbers such that their ratio is not real. The set
$$\Lambda(\omega, \omega_1, \omega_2)=\{\omega+m\omega_1+n\omega_2: m, n\in \mathbb Z\}$$
is called the lattice for $\mathcal F^ 2$ generated by $\omega$, $\omega_1$ and $\omega_2$, where $\mathbb Z$ denotes the set of integers.  In the following, we only consider the case when $\omega=0$, $\omega_1=r$ and $\omega_2=\mathrm{i}r$ with $r>0$. In this case, we denote
\begin{align}\label{Lambda}
	\Lambda(0, r, \mathrm{i}r)=\{a_k\}_{k=1}^\infty.
\end{align}
Clearly, $\Lambda(0, r, \mathrm{i}r)$ is a separated $2 r$-net for $\mathcal F^2$.

%


\section{The Douglas question on the Bergman space}\label{DB}

In Section \ref{DB}, we study the invertibility of Bergman-Toeplitz operators with positive measures as symbols and the Douglas question
for such class of Toeplitz operators.  This section is organized as follows. Firstly, we discuss two classic kinds of Bergman-Toeplitz operators:
\begin{itemize}
\item  [] \textbf{Case I.}  $d\mu=\varphi dA$, where $\varphi$ is a nonnegative function in $L^\infty(\mathbb D, dA)$;\\
\item  [] \textbf{Case II.}  $\mu$ is the pull-back measure of $dA$ under an analytic self-mapping of $\mathbb D$.
\end{itemize}
Both of the above two cases provide a positive answer to the Douglas question, i.e., the invertibility of $T_{\mu}$ and $\widetilde{\mu}$ are equivalent.
Secondly, using Lemmas \ref{M} and \ref{CRC} (which give equivalent characterizations for  the invertibility of $T_{\mu}$ and $\widetilde{\mu}$) we transform the study of the Douglas question into the discussion of  properties of $\mu$. Then by constructing a special kind of measures, we can  answer Question \ref{Q2} negatively.

Let us begin with the following three lemmas, which provide characterizations for the boundedness and invertibility of a Toeplitz operator and its Berezin transform. The latter two are particularly important for the investigation  of the Douglas question on the Bergman space.

\begin{lem}\label{BC}
\emph{(\cite[Theorem 7.5]{Zhu})}  Let $\mu$ be a finite positive Borel measure on the unit disk $\mathbb D$. Then the following are equivalent:\\
$(a)$ $T_\mu$ is bounded on the Bergman space $L_a^2$;\\
$(b)$ $\widetilde{\mu}$ is bounded on $\mathbb D$;\\
$(c)$ $\mu$ is a Carleson measure for $L_a^2$.
\end{lem}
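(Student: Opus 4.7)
The plan is to prove the equivalences in a cycle, namely (a) $\Rightarrow$ (b) $\Rightarrow$ (c) $\Rightarrow$ (a), and to invoke the description of Carleson measures in terms of the local ratios $\mu(E(z,r))/A(E(z,r))$ that is recalled right after Lemma \ref{Lim}'s statement in the preliminaries.

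First, I would establish the pairing identity $\langle T_\mu f, g\rangle_{L_a^2} = \int_{\mathbb D} f(w)\overline{g(w)}\,d\mu(w)$ for $f,g \in L_a^2$. This is a straightforward application of Fubini together with the reproducing property $g(w) = \int_{\mathbb D} g(z)/(1-w\overline{z})^{2}\,dA(z)$; the only care required is to verify absolute convergence so that Fubini applies (which one first does for a dense subclass, e.g.\ polynomials, and then extends). With this in hand, (a) $\Rightarrow$ (b) is immediate: plugging $f=g=k_z$ gives $\widetilde{\mu}(z) = \langle T_\mu k_z,k_z\rangle$, so $\widetilde{\mu}(z) \leq \|T_\mu\|$. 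Conversely, for (c) $\Rightarrow$ (a), the pairing identity combined with the Cauchy--Schwarz inequality yields $|\langle T_\mu f,g\rangle| \leq (\int|f|^2 d\mu)^{1/2}(\int|g|^2 d\mu)^{1/2} \leq C\|f\|_{L_a^2}\|g\|_{L_a^2}$, so the Carleson condition gives boundedness of $T_\mu$ with $\|T_\mu\|\leq C$.

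The core step is therefore (b) $\Rightarrow$ (c), which I would split into two sub-steps. The first is the ``easy'' direction of a local estimate: fix $r \in (0,1)$ and use that on the pseudo-hyperbolic disk $E(z,r)$ one has $|k_z(w)|^2 \gtrsim 1/(1-|z|^2)^2 \asymp 1/A(E(z,r))$. Hence
\begin{equation*}
\widetilde{\mu}(z) = \int_{\mathbb D} |k_z(w)|^2\,d\mu(w) \gtrsim \frac{\mu(E(z,r))}{A(E(z,r))},
\end{equation*}
so the boundedness of $\widetilde{\mu}$ forces $\sup_{z\in\mathbb D} \mu(E(z,r))/A(E(z,r)) < \infty$. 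The second sub-step is to upgrade the uniform bound on the averages to the Carleson inequality $\int |f|^2\,d\mu \lesssim \|f\|_{L_a^2}^2$. For this I would use the sub-mean-value inequality $|f(w)|^2 \lesssim A(E(w,r))^{-1}\int_{E(w,r)} |f(\zeta)|^2\,dA(\zeta)$ valid for $f$ analytic, integrate it against $d\mu$, swap the order of integration via Fubini, and bound the resulting inner integral $\int_{E(\zeta,r)} d\mu$ by a constant times $A(E(\zeta,r)) \asymp A(E(w,r))$ (by \eqref{zw}), leading to $\int |f|^2\,d\mu \lesssim \int |f|^2\,dA$.

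The main obstacle is ensuring the constants are uniform in $z$ and $w$ when passing between $E(z,r)$ and $E(w,r)$, since the Lebesgue measure of the pseudo-hyperbolic disk $E(z,r)$ depends on $z$; this is handled precisely by the comparability \eqref{zw} already cited in the preliminaries, together with the fact that $|1-\overline{z}w|^2 \asymp (1-|z|^2)^2$ when $w \in E(z,r)$. Once those routine comparabilities are in place, each implication reduces to a short Fubini-plus-Cauchy--Schwarz computation.
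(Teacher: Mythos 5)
Your argument is correct, but note that the paper does not prove this lemma at all: it is quoted verbatim from Zhu's book (\cite[Theorem 7.5]{Zhu}). Your cycle (a)$\Rightarrow$(b)$\Rightarrow$(c)$\Rightarrow$(a) — the pairing identity $\langle T_\mu f,g\rangle=\int f\bar g\,d\mu$, the kernel lower bound on $E(z,r)$, and the sub-mean-value/Fubini upgrade from bounded averages to the Carleson inequality — is precisely the standard proof given in that reference, so there is nothing to compare within the paper itself.
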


\begin{lem}\label{M}
\emph{(\cite[Proposition 2]{Cu})}   Let $\mu$ be a finite positive Borel measure on $\mathbb D$. If the Toeplitz operator $T_{\mu}$ is bounded on the Bergman space $L_a^2$, then the following are equivalent:\\
$(a)$ $T_\mu$  is invertible on $L_a^2$;\\
$(b)$ $\mu$ is a reverse Carleson measure for $L_a^2$.
\end{lem}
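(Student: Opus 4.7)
The plan is to exploit the fact that, for a positive measure $\mu$, the Toeplitz operator $T_\mu$ is a positive self-adjoint operator on $L_a^2$, so its invertibility is equivalent to a lower bound $T_\mu \geqslant c I$ for some $c > 0$, which in turn can be rephrased directly as a reverse Carleson estimate via a standard quadratic form identity.

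The first step is to establish the identity
\begin{align*}
\langle T_\mu f, f\rangle_{L_a^2} \,=\, \int_{\mathbb D} |f(w)|^2 \, d\mu(w), \qquad f \in L_a^2.
\end{align*}
Writing $T_\mu f(z) = \int_{\mathbb D} \frac{f(w)}{(1-z\overline{w})^2} \, d\mu(w)$, taking the Bergman inner product against $f$, interchanging integrals by Fubini's theorem, and invoking the reproducing property $\int_{\mathbb D} \frac{\overline{f(z)}}{(1-z\overline{w})^2} \, dA(z) = \overline{f(w)}$ yield the desired identity. The use of Fubini is legitimate because boundedness of $T_\mu$ implies, via Lemma \ref{BC}, that $\mu$ is a Carleson measure for $L_a^2$, so $f \in L^2(\mathbb D, d\mu)$ whenever $f \in L_a^2$. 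In particular, this identity makes manifest that $T_\mu$ is positive and self-adjoint.

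Both implications then follow quickly. For (b) $\Rightarrow$ (a): if $\mu$ is a reverse Carleson measure with constant $C$, the identity gives $\langle T_\mu f, f\rangle \geqslant C \|f\|_{L_a^2}^2$ for every $f \in L_a^2$, so $T_\mu \geqslant C I$; being positive and self-adjoint, $T_\mu$ has spectrum contained in $[C, +\infty)$ and is therefore invertible. For (a) $\Rightarrow$ (b): if $T_\mu$ is invertible with bounded inverse $T_\mu^{-1}$, then positivity and self-adjointness of $T_\mu^{-1}$ give $T_\mu^{-1} \leqslant \|T_\mu^{-1}\| \, I$, hence $T_\mu \geqslant \|T_\mu^{-1}\|^{-1} I$; the identity converts this back into $\int_{\mathbb D} |f|^2 \, d\mu \geqslant \|T_\mu^{-1}\|^{-1} \|f\|_{L_a^2}^2$, which is the reverse Carleson condition.

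The argument reduces to standard Hilbert-space operator theory once the quadratic form identity is secured, so no step is particularly delicate. The only point deserving care is the justification of Fubini's theorem when deriving the identity, which rests on the Carleson property of $\mu$ guaranteed by Lemma \ref{BC}; everything else then follows from the well-known fact that a positive self-adjoint operator is invertible if and only if it is bounded below by a positive multiple of the identity.
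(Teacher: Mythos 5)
Your proof is correct. A remark on context: the paper does not actually prove Lemma \ref{M}; it is quoted from \v{C}u\v{c}kovi\'{c}--Vasaturo \cite[Proposition 2]{Cu}. The fairest comparison is therefore with the paper's own proof of the Fock-space analogue (Theorem \ref{MF}, resting on Lemma \ref{IntegralF}), and your argument follows the same skeleton: establish the quadratic-form identity $\langle T_\mu f, f\rangle = \int_{\mathbb D}|f|^2\,d\mu$ and then translate invertibility of the positive operator $T_\mu$ into a two-sided norm comparison. For the direction ``reverse Carleson $\Rightarrow$ invertible'' you are in fact more direct than the paper: you read off $T_\mu \geqslant CI$ straight from the identity, whereas the paper's Theorem \ref{MF} goes through a duality argument with the orthogonal projection $Q: L^2(d\mu_0)\to\mathcal F^2$ to get $\|f\|\lesssim\|T_\mu f\|$ and only then invokes self-adjointness; both routes are valid, and yours is shorter. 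The one place where your write-up is thinner than it should be is the justification of Fubini: knowing $f\in L^2(\mathbb D, d\mu)$ does not by itself give absolute convergence of the double integral $\int\!\!\int |f(w)|\,|f(z)|\,|1-z\overline w|^{-2}\,d\mu(w)\,dA(z)$. The clean fix --- which is exactly what the paper does in Lemma \ref{IntegralF} --- is to use the $L^1$ Carleson embedding on the analytic function $w\mapsto f(w)K_z(w)$ to dominate the inner integral by $\int_{\mathbb D}|f(w)|\,|1-z\overline w|^{-2}\,dA(w)$, and then use the boundedness on $L^2(\mathbb D,dA)$ of the integral operator with kernel $|1-z\overline w|^{-2}$ (Schur's test, \cite[Theorem 1.9]{HKZ}). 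With that detail supplied, the argument is complete.
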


\begin{lem}\label{CRC}
\emph{(\cite[Theorem 4.1]{GT})} A Carleson measure $\mu$  for the Bergman space satisfies the  reverse Carleson condition if and only if the Berezin transform $\widetilde{\mu}$ is bounded above and below on $\mathbb D$.
\end{lem}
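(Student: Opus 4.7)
The statement is an equivalence, but since $\mu$ is assumed to be a Carleson measure for $L_a^2$, Lemma \ref{BC} already tells us that $\widetilde{\mu}$ is bounded above on $\mathbb{D}$, and the Carleson characterization of positive measures gives $\mu(E(z,r))/A(E(z,r)) \leqslant C$ for every fixed $r$. Thus both sides of the asserted equivalence automatically carry the ``upper'' half, and the real content is to show that $\widetilde{\mu}$ is bounded below on $\mathbb{D}$ if and only if $\mu(E(z,r))/A(E(z,r))$ is bounded below on $\mathbb{D}$ for some (equivalently, every sufficiently large) $r\in(0,1)$.

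The engine of the proof is the standard pointwise estimate $|k_z(w)|^2 = (1-|z|^2)^2/|1-\bar z w|^4$ combined with two facts: (i) for $w\in E(z,r)$ one has $|1-\bar z w|\asymp 1-|z|^2$, hence $|k_z(w)|^2 \asymp 1/(1-|z|^2)^2 \asymp 1/A(E(z,r))$ with constants depending only on $r$; and (ii) $A(E(z,r)) \asymp (1-|z|^2)^2$. For the easy direction (reverse Carleson condition $\Rightarrow$ $\widetilde{\mu}$ bounded below), I would simply restrict the defining integral for $\widetilde{\mu}(z)$ to $E(z,r)$ and apply (i) to get
\begin{equation*}
\widetilde{\mu}(z) \;\geqslant\; \int_{E(z,r)} |k_z(w)|^2\, d\mu(w) \;\gtrsim\; \frac{\mu(E(z,r))}{A(E(z,r))} \;\geqslant\; C^{-1}.
\end{equation*}

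For the harder direction ($\widetilde{\mu}$ bounded below $\Rightarrow$ reverse Carleson condition), suppose $\widetilde{\mu}(z)\geqslant \delta>0$ for all $z\in\mathbb{D}$, and split
\begin{equation*}
\widetilde{\mu}(z) \;=\; \int_{E(z,r)}|k_z(w)|^2\,d\mu(w) \;+\; \int_{\mathbb{D}\setminus E(z,r)} \frac{(1-|z|^2)^2}{|1-\bar z w|^4}\,d\mu(w).
\end{equation*}
Here is where Lemma \ref{Lim} is crucial: since $\mu$ is Carleson for $L_a^2$, the second term tends to $0$ uniformly in $z$ as $r\to 1^-$. Hence I can choose $r=r(\delta)\in(0,1)$ close enough to $1$ so that the second term is bounded by $\delta/2$ for every $z\in\mathbb{D}$, which forces $\int_{E(z,r)}|k_z(w)|^2\,d\mu(w)\geqslant \delta/2$. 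Using the upper estimate $|k_z(w)|^2\lesssim 1/A(E(z,r))$ for $w\in E(z,r)$ from (i), this rearranges to
\begin{equation*}
\frac{\mu(E(z,r))}{A(E(z,r))} \;\gtrsim\; \int_{E(z,r)}|k_z(w)|^2\, d\mu(w) \;\geqslant\; \delta/2,
\end{equation*}
which is exactly the lower bound in the reverse Carleson condition. Combined with the upper bound coming for free from the Carleson hypothesis, this completes the proof.

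The main obstacle is the harder direction, and specifically the need to discard the contribution of $|k_z|^2$ from outside a pseudo-hyperbolic disk uniformly in $z$; this is precisely the role played by Lemma \ref{Lim}. Once that tail estimate is available, the remainder is purely the standard comparability $|k_z(w)|^2\asymp 1/A(E(z,r))$ on $E(z,r)$, and no further machinery is required.
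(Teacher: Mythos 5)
Your proof is correct, and it is essentially the argument the authors have in mind: the paper itself does not prove Lemma \ref{CRC} but cites \cite[Theorem 4.1]{GT}, yet your two-step scheme (restrict the Berezin integral to $E(z,r)$ using $|k_z(w)|^2\asymp 1/A(E(z,r))$ for the easy direction; kill the tail uniformly via Lemma \ref{Lim} and rearrange for the hard direction) is exactly the proof the paper carries out for the Fock-space analogue, Theorem \ref{CRCF}, with Proposition \ref{LimF} playing the role of Lemma \ref{Lim}. No gaps; the only point worth making explicit is that the comparability constants depend on the chosen $r$, which is harmless since the reverse Carleson condition only asks for some $r\in(0,1)$.
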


Applying Lemma \ref{CRC} and combining the conclusions in \cite{Lue, ZZ} and Lemma \ref{M}, we obtain the following five equivalent characterizations for the invertibility of  Bergman-Toeplitz operators in Case I.

\begin{thm}\label{M1}
Let $\varphi$ be a bounded positive function on $\mathbb D$. If $d\mu=\varphi dA$, then the following are equivalent:\\
$(a)$ $T_\mu=T_\varphi$ is invertible on $L_a^2$;\\
$(b)$ $\widetilde{\mu}=\widetilde{\varphi}$ is invertible in $L^\infty(\mathbb D, dA)$, i.e., $\widetilde{\mu}$ is bounded below on $\mathbb D $;\\
$(c)$ There exists an $r>0$ such that  $\chi_{\{z\in \mathbb D: \varphi(z)>r\}} dA$  is a reverse Carleson measure for $L_a^2$;\\
$(d)$ $\mu$ is a  reverse Carleson measure for $L_a^2$;\\
$(e)$ $\mu$ satisfies the   reverse Carleson condition \emph{(\ref{RCC})}.
\end{thm}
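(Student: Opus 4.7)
The plan is to derive all five equivalences as an assembly of Lemmas \ref{BC}, \ref{M}, and \ref{CRC} together with two results from the literature already discussed in the introduction: Luecking's invertibility criterion \cite{Lue} and the Zhao--Zheng affirmative answer to Question \ref{Douglas} for bounded positive symbols \cite{ZZ}. No substantially new analytic technique is required; the content of the theorem is the consolidation of these ingredients.

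The first step is a preparatory remark. Since $\varphi \in L^\infty(\mathbb{D}, dA)$ is nonnegative, the measure $d\mu = \varphi\,dA$ is trivially a Carleson measure for $L_a^2$ with Carleson constant $\|\varphi\|_\infty$. By Lemma \ref{BC}, this guarantees both that $T_\mu$ is bounded on $L_a^2$ and that $\widetilde{\mu}$ is automatically bounded above on $\mathbb{D}$. Consequently, the invertibility of $\widetilde{\mu}$ in $L^\infty(\mathbb{D}, dA)$ required in (b) reduces to $\widetilde{\mu}$ being bounded below, which is the only nontrivial content.

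With this in hand I would organize the chain of equivalences as follows. The equivalence (a) $\Leftrightarrow$ (d) is the immediate application of Lemma \ref{M} to the bounded operator $T_\mu$. The equivalence (a) $\Leftrightarrow$ (c) is precisely Luecking's theorem in \cite{Lue}: for a bounded nonnegative symbol, $T_\varphi$ is invertible on $L_a^2$ if and only if there is a threshold $r > 0$ such that $\chi_{\{\varphi > r\}}\, dA$ is a reverse Carleson measure for $L_a^2$. The equivalence (a) $\Leftrightarrow$ (b) splits into two pieces: the forward direction (a) $\Rightarrow$ (b) is the standard positive-operator estimate $\widetilde{\varphi}(z) = \langle T_\varphi k_z, k_z\rangle \geqslant \|T_\varphi^{-1}\|^{-1}$ since $T_\varphi \geqslant 0$, and the reverse direction (b) $\Rightarrow$ (a) is exactly the Zhao--Zheng resolution \cite{ZZ} of Question \ref{Douglas} for bounded positive symbols. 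Finally, (b) $\Leftrightarrow$ (e) is obtained from Lemma \ref{CRC}: since $\mu$ is Carleson, the lemma says (\ref{RCC}) holds if and only if $\widetilde{\mu}$ is bounded above and below on $\mathbb{D}$, and by the preparatory remark the ``bounded above'' half is automatic, leaving ``bounded below,'' i.e., (b).

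The main obstacle, in the sense of what makes the statement genuinely nontrivial, is entirely absorbed by the cited ingredients: Luecking's construction of test functions from level sets in \cite{Lue}, and the Zhao--Zheng machinery in \cite{ZZ} used to promote a lower bound on $\widetilde{\varphi}$ to invertibility of $T_\varphi$. Within the synthesis itself the only care required is to verify that the Carleson hypothesis on $\mu$ is strong enough to trivialize the ``upper bound on $\widetilde{\mu}$'' clause in Lemma \ref{CRC}, and to check that the bounded positive symbol hypothesis of Theorem \ref{M1} matches the hypotheses of the two external results invoked.
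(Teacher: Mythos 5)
Your proposal is correct and follows essentially the same route as the paper: the paper likewise obtains $(a)\Leftrightarrow(d)$ from Lemma \ref{M}, $(b)\Leftrightarrow(e)$ from Lemma \ref{CRC} (with the upper bound on $\widetilde{\mu}$ automatic since $\mu$ is Carleson), $(a)\Leftrightarrow(c)$ from \cite[Corollary 3]{Lue}, and $(a)\Leftrightarrow(b)$ from \cite[Theorem 3.2]{ZZ}. Your preparatory remark on boundedness only makes explicit what the paper leaves implicit.
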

\begin{proof}
	$(a) \Leftrightarrow (d)$ and $(b) \Leftrightarrow (e)$ are special cases of Lemmas \ref{M} and \ref{CRC}. $(a) \Leftrightarrow (c)$ was established in \cite[Corollary 3]{Lue}. Lastly, $(a) \Leftrightarrow (b)$ was proven in \cite[Theorem 3.2]{ZZ}.
\end{proof}

 Next,  we turn to the study of the invertibility of Bergman-Toeplitz operators in Case II. Suppose that $\varphi$ is an analytic self-mapping of $\mathbb D$, i.e., $\varphi$ is an analytic mapping from  $\mathbb D$ to $\mathbb D$. Let $\mu_{\varphi}$ be the pull-back measure of the normalized area measure on $\mathbb D$ under the mapping $\varphi$ defined on Borel subsets of $\mathbb D$ by
$$\mu_\varphi(E)=A(\varphi^{-1}(E)).$$
In the following,  we will characterize the invertibility of $T_{\mu_\varphi}$ in terms of the Berezin transform $\widetilde{\mu_{\varphi}}$ and the composition operator $C_\varphi$ (which is defined by $C_\varphi f=f\circ \varphi$) on the Bergman space $L_a^2$.

Recalling that Littlewood's subordination theorem tells us that $C_\varphi$ is bounded on the Bergman space, we have
$$\widetilde{\mu_\varphi}(z)=\int_{\mathbb D}|k_z(w)|^2d\mu_\varphi(w)=\int_{\mathbb D}|k_z(\varphi(w))|^2dA(w)=\|C_\varphi k_z\|^2_{L_a^2}\leqslant \|C_\varphi\|^2.$$
 It follows that $\widetilde{\mu_\varphi}$ is bounded on $\mathbb D$. According to Lemma \ref{BC}, we have that $\mu_\varphi$ is a Carleson measure for $L_a^2$ and the Toeplitz operator  $T_{\mu_\varphi}$ is bounded. Furthermore, the following relationship between  $C_\varphi$ and $T_{\mu_\varphi}$ was obtained in  \cite{Zhu} (see the identity in (11.5) of Section 11.3):
$$T_{\mu_\varphi}f=C_{\varphi}^*C_\varphi f, \ \ \  \ f\in L_a^2.$$

The following lemma characterizes when the composition operator $C_\varphi$ has closed range on the Bergman space, which was established in \cite[Theorems 4.2 and 4.4]{GT}.

\begin{lem}\label{Comp}
Let $\varphi$ be a non-constant analytic self-mapping of the unit disk $\mathbb D$. Then the following statements are equivalent:\\
$(a)$ $\mu_{\varphi}$ satisfies the reverse Carleson condition;\\
$(b)$ $\widetilde{\mu_\varphi}$ is bounded below on $\mathbb D$;\\
$(c)$ $C_\varphi$ has closed range on the Bergman space $L_a^2$.
\end{lem}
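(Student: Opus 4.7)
The plan is to split the three-way equivalence as $(a)\Leftrightarrow(b)$ and $(b)\Leftrightarrow(c)$, exploiting the operator identity $T_{\mu_\varphi}=C_\varphi^*C_\varphi$ and the formula $\widetilde{\mu_\varphi}(z)=\|C_\varphi k_z\|_{L_a^2}^2$ recorded just before the statement. In particular, $\mu_\varphi$ is automatically a Carleson measure for $L_a^2$ and $\widetilde{\mu_\varphi}$ is automatically bounded above on $\mathbb{D}$; hence $(a)\Leftrightarrow(b)$ is immediate from Lemma \ref{CRC}, since the reverse Carleson condition for the Carleson measure $\mu_\varphi$ reduces in the presence of an a priori upper bound on $\widetilde{\mu_\varphi}$ to lower-boundedness of $\widetilde{\mu_\varphi}$.

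For $(c)\Rightarrow(b)$, I first use that $\varphi$ is non-constant to conclude that $C_\varphi$ is injective: any $f$ with $f\circ\varphi\equiv 0$ must vanish on the non-empty open set $\varphi(\mathbb{D})$, hence on all of $\mathbb{D}$ by analytic continuation. Injectivity plus the closed-range hypothesis and the open mapping theorem force $\|C_\varphi g\|^2\geqslant\delta\|g\|^2$ for some $\delta>0$ and all $g\in L_a^2$. Evaluating at $g=k_z$ then gives $\widetilde{\mu_\varphi}(z)=\|C_\varphi k_z\|^2\geqslant\delta$ on $\mathbb{D}$.

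For the main direction $(b)\Rightarrow(c)$, the plan is to promote the local reverse Carleson condition provided by $(a)\Leftrightarrow(b)$ to the global statement that $\mu_\varphi$ is a reverse Carleson measure for $L_a^2$; once this is in hand, Lemma \ref{M} gives that $T_{\mu_\varphi}=C_\varphi^*C_\varphi$ is invertible, so $C_\varphi$ is bounded below and in particular has closed range. To pass from local to global, I would fix a lattice $\{a_k\}$ for $L_a^2$ with a bounded-overlap covering $\mathbb{D}=\bigcup_k E(a_k,r)$, appeal to the sampling estimate
\begin{equation*}
\|f\|_{L_a^2}^2\asymp\sum_k(1-|a_k|^2)^2|f(a_k)|^2,
\end{equation*}
and combine it with the pointwise lower bound $\mu_\varphi(E(a_k,r))\gtrsim(1-|a_k|^2)^2$ together with the pull-back structure of $\mu_\varphi$ to derive
\begin{equation*}
\int_{\mathbb{D}}|f|^2\,d\mu_\varphi \gtrsim \sum_k (1-|a_k|^2)^2|f(a_k)|^2 \asymp \|f\|_{L_a^2}^2.
\end{equation*}

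The hard part will be the promotion step just above. Theorem \ref{CE} below asserts that for a general positive Borel measure the reverse Carleson condition does \emph{not} force the reverse Carleson measure property, so the pull-back structure $\mu_\varphi(E)=A(\varphi^{-1}(E))$ must be used essentially; in particular the naive subharmonicity inequality on each $E(a_k,r)$ goes in the wrong direction. A natural approach is to extract, inside each preimage $\varphi^{-1}(E(a_k,r))$, a Borel subset of Lebesgue area comparable to $(1-|a_k|^2)^2$ on which $f\circ\varphi$ is well-behaved, thereby reducing to the averaging-set case $d\mu=\chi_G\,dA$ already treated in Theorem \ref{M1} part $(c)\Leftrightarrow(d)$, which is due to Luecking. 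The delicate point is to choose such an averaging set \emph{uniformly in} $k$, and this is precisely where hypothesis $(a)$ enters.
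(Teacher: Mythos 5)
First, note that the paper does not prove Lemma \ref{Comp} at all: it is quoted verbatim from Ghatage--Tjani \cite[Theorems 4.2 and 4.4]{GT}, so there is no internal argument to compare against. Judged on its own terms, your proposal is correct and complete for two of the three links. The equivalence $(a)\Leftrightarrow(b)$ does follow immediately from Lemma \ref{CRC} once one observes (as the paper does just before the statement) that $\mu_\varphi$ is automatically a Carleson measure with $\widetilde{\mu_\varphi}\leqslant\|C_\varphi\|^2$, and $(c)\Rightarrow(b)$ via injectivity of $C_\varphi$, the inverse mapping theorem, and $\widetilde{\mu_\varphi}(z)=\|C_\varphi k_z\|_{L_a^2}^2$ is fine.

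The genuine gap is the implication $(b)\Rightarrow(c)$, which is the entire mathematical content of the lemma, and which you have sketched but not proved. Since $\|C_\varphi f\|_{L_a^2}^2=\int_{\mathbb D}|f|^2\,d\mu_\varphi$, statement $(c)$ is exactly the assertion that $\mu_\varphi$ is a reverse Carleson measure, so what you must show is that the \emph{local} condition $\mu_\varphi(E(z,r))\gtrsim A(E(z,r))$ upgrades to the \emph{global} sampling inequality $\int_{\mathbb D}|f|^2\,d\mu_\varphi\gtrsim\|f\|_{L_a^2}^2$. Your displayed chain $\int_{\mathbb D}|f|^2\,d\mu_\varphi\gtrsim\sum_k(1-|a_k|^2)^2|f(a_k)|^2$ is precisely the step that fails for general measures satisfying the reverse Carleson condition (your own Theorem \ref{CE} reference makes this point), because $\mu_\varphi$ may concentrate mass where $|f|$ is small inside each $E(a_k,r)$; the bound $\mu_\varphi(E(a_k,r))\gtrsim(1-|a_k|^2)^2$ alone gives no lower bound on $\int_{E(a_k,r)}|f|^2\,d\mu_\varphi$ in terms of $|f(a_k)|^2$. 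You correctly diagnose that the pull-back structure must be used and propose reducing to Luecking's averaging-set criterion, but the construction of such a set ``uniformly in $k$'' is exactly the heart of Ghatage--Tjani's Theorem 4.2: one must pass to the level sets $G_\varepsilon=\{z:|\varphi'(z)|(1-|z|^2)/(1-|\varphi(z)|^2)>\varepsilon\}$, show via a change of variables with the counting function that the Berezin lower bound forces $\varphi(G_\varepsilon)$ (or a suitable image set) to satisfy Luecking's density condition, and only then invoke \cite[Corollary 3]{Lue}. None of this is in your write-up; as it stands the ``hard part'' is an acknowledged open sub-problem rather than a proof, so the argument is incomplete precisely where the lemma is nontrivial.
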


Using Lemma \ref{Comp}, we are able to establish several necessary and sufficient conditions concerning the invertibility of the Bergman-Toeplitz operator $T_{\mu_\varphi}$
via the reverse Carleson measure for $L_a^2$ and the Berezin transform of $\mu_\varphi$.

\begin{thm}\label{M2}
Let $\varphi$ be a non-constant analytic self-mapping of $\mathbb D$ and $d\mu_{\varphi}=dA\circ \varphi^{-1}$.  Then the following five conditions are equivalent:\\
$(a)$  $T_{\mu_\varphi}$  is invertible on $L_a^2$;\\
$(b)$ $\widetilde{\mu_\varphi}$  is bounded below on $\mathbb D$;\\
$(c)$ $\mu_\varphi$ satisfies the reverse Carleson condition \emph{(\ref{RCC})};\\
$(d)$  $C_\varphi$ has closed range on $L_a^2$;\\
$(e)$ $\mu_\varphi$ is a reverse Carleson measure for $L_a^2$.
\end{thm}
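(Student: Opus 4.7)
The plan is to split the five conditions into two groups that are already handled by prior lemmas, and then to build a single bridge between the groups. Specifically, the preceding discussion shows that $\mu_\varphi$ is a Carleson measure for $L_a^2$ and that $T_{\mu_\varphi}$ is bounded (via Littlewood's subordination theorem), so Lemma \ref{M} immediately gives $(a)\Leftrightarrow(e)$. On the other side, Lemma \ref{Comp} applied to $\mu_\varphi$ directly yields $(b)\Leftrightarrow(c)\Leftrightarrow(d)$. So the entire task reduces to linking one of $(a),(e)$ to one of $(b),(c),(d)$.

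For the bridge I would use the factorization $T_{\mu_\varphi}=C_\varphi^*C_\varphi$ recorded just before the theorem. Since $\varphi$ is a non-constant analytic self-map of $\mathbb D$, its image contains an open set, and by the identity theorem $C_\varphi f=f\circ\varphi=0$ forces $f\equiv 0$; hence $C_\varphi$ is injective on $L_a^2$. The standard Hilbert space fact that $A^*A$ is invertible if and only if $A$ is bounded below then gives: $T_{\mu_\varphi}$ is invertible $\Longleftrightarrow$ $C_\varphi$ is bounded below $\Longleftrightarrow$ $C_\varphi$ is injective with closed range $\Longleftrightarrow$ $C_\varphi$ has closed range. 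This produces $(a)\Leftrightarrow(d)$, closing the chain of equivalences.

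So the whole proof amounts to three short paragraphs: (1) cite Lemma \ref{M} to obtain $(a)\Leftrightarrow(e)$, after observing the already-established boundedness of $T_{\mu_\varphi}$; (2) cite Lemma \ref{Comp} to obtain $(b)\Leftrightarrow(c)\Leftrightarrow(d)$; (3) use $T_{\mu_\varphi}=C_\varphi^*C_\varphi$, injectivity of $C_\varphi$ (from $\varphi$ non-constant), and the $A^*A$-invertibility criterion to obtain $(a)\Leftrightarrow(d)$. I do not anticipate a genuine obstacle here, since all deep content (the $C_\varphi$--closed-range theorem of Ghatage--Tjani and the reverse Carleson characterization of invertibility) is already packaged in the quoted lemmas; the only point requiring a brief justification is the injectivity of $C_\varphi$, which is where the hypothesis that $\varphi$ is non-constant enters essentially.
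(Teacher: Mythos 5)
Your proposal is correct and follows essentially the same route as the paper: the paper likewise disposes of $(a)\Leftrightarrow(e)$ via Lemma \ref{M} and $(b)\Leftrightarrow(c)\Leftrightarrow(d)$ via Lemma \ref{Comp}, and then bridges the groups using the factorization $T_{\mu_\varphi}=C_\varphi^*C_\varphi$ together with injectivity of $C_\varphi$ to get $C_\varphi$ bounded below and hence $C_\varphi^*C_\varphi\geqslant\delta^2 I$. The only cosmetic difference is that the paper closes the cycle with the trivial implication $(a)\Rightarrow(b)$ plus $(d)\Rightarrow(a)$, whereas you prove the two-sided equivalence $(a)\Leftrightarrow(d)$ directly; both rest on the same standard fact about $A^*A$.
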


\begin{proof}
Note that $(a)\Rightarrow (b)$ is obvious and $(b)\Leftrightarrow (c)\Leftrightarrow (d)$ were established in Lemma \ref{Comp}.
In addition, $(a)\Leftrightarrow (e)$ was obtained in Lemma  \ref{M}. Therefore, it is sufficient to show that $(d)\Rightarrow (a)$.

Suppose that $(d)$ holds, i.e., the composition operator $C_\varphi$ has closed range on the Bergman space. Since $C_\varphi$ is injective on $L_a^2$, we conclude by the inverse mapping theorem  that $C_\varphi$ is bounded below on $L_a^2$. Thus there exists a constant $\delta>0$ such that
$$\|C_\varphi f\|_{L_a^2}\geqslant \delta \|f\|_{L_a^2}, \ \ \ \ f\in L_a^2.$$
It follows that
$$\langle C_\varphi^* C_\varphi f, f\rangle_{L_a^2}=\|C_\varphi f\|_{L_a^2}^2\geqslant \delta^2\|f\|_{L_a^2}^2=\delta^2\langle f, f\rangle_{L_a^2}$$
for all $f\in L_a^2$. Thus we have  $$C_\varphi^*C_\varphi \geqslant \delta^2I,$$ which yields that $C_\varphi^* C_\varphi$ is invertible on $L_a^2$. Hence $T_{\mu_{\varphi}}$ is also invertible on the Bergman space, completing the proof of Theorem \ref{M2}.
\end{proof}

 For Toeplitz operators in Cases I and II, the key point of the proofs of Theorems \ref{M1} and \ref{M2} is to show that $T_{\mu}$ is invertible if $\widetilde{\mu}$ is, which relies on the specific expression of $\mu$, i.e., $d \mu = \varphi d A$ or $d \mu_{\varphi} = d A \circ \varphi^ {-1}$.

In the rest of this section, we are going to give a negative answer to the Douglas question for Bergman-Toeplitz operators induced by positive measures (Question \ref{Q2}). Indeed, we will show in the end of this section that the invertibility of a Bergman-Toeplitz operator with a positive measure $\mu$ as the symbol is not completely determined by the invertibility of the Berezin transform of $\mu$.  According to Lemmas \ref{M} and \ref{CRC}, it is sufficient to construct a finite positive  measure  $\mu$ on the open unit disk $\mathbb D$ such that \emph{$\mu$ is not a reverse Carleson measure for the Bergman space but satisfies the reverse Carleson condition (\ref{RCC})}.

Because the invertibility of a Toeplitz operator and its Berezin transform is equivalent in Case I (see Theorem \ref{M1}),  to solve the Douglas question for Bergman-Toeplitz operators induced by general positive measures we try  to consider the measures which are singular with respect to $d A$. This inspires the following observation.

\begin{prop} \label{proposition 1}
 Suppose that $r \in (0, 1)$ and $\Gamma=\{a_k\}_{k=1}^\infty$ is a uniformly discrete $r$-net. Define
\begin{align}\label{mu}
	\mu = \sum_{k = 1}^ {\infty} {(1 - |a_k|^ 2)^ 2 \delta_{a_k}},
\end{align}
where $\delta_{a_k}$ is the point mass concentrated at  $a_k$.  Then $\mu$ is a finite positive Borel measure satisfying the reverse Carleson condition for $L_a^2$.
\end{prop}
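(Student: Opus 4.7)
The plan is to verify two things: $\mu$ is a finite measure, and $\mu$ satisfies both inequalities in the reverse Carleson condition (\ref{RCC}) with the same $r$ that makes $\Gamma$ an $r$-net.

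\textbf{Finiteness.} Let $\varepsilon$ be the separation constant of $\Gamma$. The disks $\{E(a_k,\varepsilon/2)\}$ are pairwise disjoint in $\mathbb{D}$, and the standard estimate $A(E(a,s))\asymp (1-|a|^2)^2$ (with constants depending only on $s$) gives
\begin{align*}
\sum_{k=1}^{\infty}(1-|a_k|^2)^2 \,\asymp\, \sum_{k=1}^{\infty} A(E(a_k,\varepsilon/2)) \,\leqslant\, A(\mathbb{D}) \,<\, +\infty,
\end{align*}
so $\mu(\mathbb{D})<+\infty$.

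\textbf{Lower bound in (\ref{RCC}).} For any $z\in\mathbb{D}$, the $r$-net property provides some $a_j\in\Gamma$ with $z\in E(a_j,r)$, which by symmetry of $\rho$ means $a_j\in E(z,r)$. Since $\rho(a_j,z)<r$, the comparability (\ref{zw}) yields $(1-|a_j|^2)^2 \asymp (1-|z|^2)^2 \asymp A(E(z,r))$, so $\mu(E(z,r))\geqslant (1-|a_j|^2)^2 \gtrsim A(E(z,r))$.

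\textbf{Upper bound in (\ref{RCC}).} I would first show that $\#\{k:a_k\in E(z,r)\}$ is bounded by a constant $N$ independent of $z$. The key tool is the Möbius-invariant triangle inequality $\rho(u,v)\leqslant \frac{\rho(u,w)+\rho(w,v)}{1+\rho(u,w)\rho(w,v)}$: if $a_k\in E(z,r)$ and $w\in E(a_k,\varepsilon/2)$, then $\rho(w,z)\leqslant r':=\frac{r+\varepsilon/2}{1+r\varepsilon/2}\in(0,1)$, so $E(a_k,\varepsilon/2)\subseteq E(z,r')$. These sub-disks are disjoint by separation, and each has area $\asymp A(E(z,r'))\asymp (1-|z|^2)^2$ by (\ref{zw}) (applied with the bounded pseudo-hyperbolic distance $\rho(a_k,z)<r$). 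Summing gives $N\cdot(1-|z|^2)^2 \lesssim A(E(z,r')) \asymp (1-|z|^2)^2$, so $N$ is bounded. Finally, for each $a_k\in E(z,r)$, (\ref{zw}) again gives $(1-|a_k|^2)^2\asymp A(E(z,r))$, hence
\begin{align*}
\mu(E(z,r)) = \sum_{a_k\in E(z,r)}(1-|a_k|^2)^2 \,\lesssim\, N\cdot A(E(z,r)) \,\lesssim\, A(E(z,r)).
\end{align*}

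The main (mild) obstacle is deploying (\ref{zw}) correctly and invoking the Möbius-invariant form of the triangle inequality to pass between disks centered at $z$ and at $a_k$; once those standard facts are in hand, the estimates are essentially immediate.
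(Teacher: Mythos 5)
Your proof is correct and follows essentially the same route as the paper: the lower bound in (\ref{RCC}) is obtained exactly as in the paper's proof (pick $a_{k_1}$ with $z\in E(a_{k_1},r)$, then apply (\ref{zw})), while your finiteness and upper-bound arguments via separation and disjoint pseudo-hyperbolic disks correctly supply the details that the paper dismisses as ``clear.''
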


\begin{proof}
Clearly, $\mu$ is a finite Borel measure on $\mathbb D$ and $$\mu (E(z, r)) \lesssim  A (E(z, r))$$ for all $z\in \mathbb D$. Let $z$ be any point in $\mathbb{D}$. Since $\{ E(a_k, r) \}_{k = 1}^ {\infty}$ is a covering of $\mathbb{D}$, $z$ is in some $E(a_{k_1}, r)$. It follows that
\begin{align*}
	\mu (E(z, r)) \geqslant (1 - |a_{k_1}|^ 2)^ 2 \asymp A(E(a_{k_1}, r)).
\end{align*}
Because $\rho (z, a_{k_1})$ stays bounded, by (\ref{zw}) we further have
\begin{align*}
	\mu (E(z, r)) \gtrsim A(E(z, r)),
\end{align*}
finishing the proof.
\end{proof}

\begin{prop} \label{proposition 2}
	The measure $\mu$ defined by \emph{(\ref{mu})} is a reverse Carleson measure for $L_a^2$ if and only if  $\Gamma$ is a sampling sequence.
\end{prop}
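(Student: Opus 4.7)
The plan is to reduce both conditions (reverse Carleson for $\mu$ and sampling for $\Gamma$) to the same pair of one-sided inequalities, so that the equivalence becomes essentially tautological once we dispose of the ``automatic'' direction.

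First, since $\mu$ is purely atomic with $\mu(\{a_k\}) = (1-|a_k|^2)^2$, for every $f \in L_a^2$ we have
\begin{align*}
\int_{\mathbb{D}} |f(z)|^2 \, d\mu(z) \;=\; \sum_{k=1}^{\infty} (1-|a_k|^2)^2 |f(a_k)|^2.
\end{align*}
Thus the reverse Carleson inequality for $\mu$ is \emph{literally} the lower sampling inequality, and the Carleson inequality for $\mu$ is literally the upper sampling inequality. So the equivalence will follow once we know that $\mu$ is automatically a Carleson measure for $L_a^2$, because then the sampling condition reduces to the lower bound alone.

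Second, I would establish the Carleson property for $\mu$ as a consequence of the uniform discreteness of $\Gamma$. The proof of Proposition \ref{proposition 1} already records $\mu(E(z, r)) \lesssim A(E(z, r))$ for every $z \in \mathbb{D}$, since the separation of $\{a_k\}$ limits the number of points falling in any pseudo-hyperbolic disk of a fixed radius. Combined with the standard criterion (recalled right after Lemma \ref{BC}, or more directly Lemma \ref{BC} itself via Berezin transforms) this gives
\begin{align*}
\sum_{k=1}^{\infty} (1 - |a_k|^2)^2 |f(a_k)|^2 \;\lesssim\; \|f\|_{L_a^2}^2, \qquad f \in L_a^2.
\end{align*}

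Finally, I would assemble the two directions. If $\mu$ is a reverse Carleson measure for $L_a^2$, then combining this with the Carleson bound from the previous step gives $\sum (1-|a_k|^2)^2 |f(a_k)|^2 \asymp \|f\|_{L_a^2}^2$, which is exactly the sampling condition in Definition \ref{def}; conversely, a sampling sequence satisfies the lower bound by definition, and this is the reverse Carleson inequality for $\mu$. I do not anticipate a serious obstacle here: the real content is the observation that for atomic measures weighted by $(1 - |a_k|^2)^2$ on a separated net, the Carleson/reverse Carleson pair coincides verbatim with the sampling norm equivalence, and the upper bound comes for free from separation.
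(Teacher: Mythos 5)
Your proposal is correct and follows essentially the same route as the paper: the paper's proof consists precisely of the observation that $\int_{\mathbb{D}}|f|^2\,d\mu=\sum_k(1-|a_k|^2)^2|f(a_k)|^2$, from which "the assertion follows immediately." You merely make explicit the step the paper leaves implicit, namely that the upper (Carleson) bound comes for free from the uniform discreteness of $\Gamma$ via the estimate $\mu(E(z,r))\lesssim A(E(z,r))$ already recorded in Proposition \ref{proposition 1}, so that the reverse Carleson inequality alone upgrades to the two-sided sampling equivalence.
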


\begin{proof}
	\par Noting that
	\begin{align*}
		\int_{\mathbb{D}} |f(z)|^2 d \mu(z) = \sum_{k = 1}^ {\infty} {(1 - |a_k|^ 2)^ 2 |f(a_k)|^ 2}, \qquad f \in L_a^ 2,
	\end{align*}
	the assertion follows immediately.
\end{proof}

Given an $R > 0$. It follows from \cite[Lemma 4.8]{Zhu} that there is an $R$-lattice $\Gamma_0 = \{ b_k \}_{k = 1}^ {\infty}$. According to Proposition \ref{proposition 1}, the measure
\begin{align}\label{mu0}
	\mu_0 = \sum_{k = 1}^ {\infty} {(1 - |b_k|^ 2)^ 2 \delta_{b_k}}
\end{align}
also satisfies the reverse Carleson condition. Consequently,  we conclude by Proposition \ref{proposition 2} that \emph{$\mu_0$ is a reverse Carleson measure for the Bergman space  $L_a^2$ if and only if $\Gamma_0$ is a sampling sequence}.

Recently, using a characterization of sampling sequence for $L_a^2$ (see \cite[Theorem 5.23]{HKZ}), Zhuo and Ye \cite{ZY} showed that the measure $\mu_0$ defined in (\ref{mu0}) is not a reverse Carleson measure for $L_a^2$ if the lower Seip density (see (5.22) in \cite{HKZ} for the definition) of $\Gamma_0$ is less than or equal to $\frac{1}{2}$, see Theorem 1.1 and Example 1.2 in \cite{ZY} for the details.

Next, we will further  discuss how the values of $R$ influence $\mu_0$ to be a reverse Carleson measure for $L_a^2$ by using the following  criteria on sampling and interpolation sequences for $L_a^2$.

\begin{lem} \label{SI}
	\emph{(\cite[Theorems 10 and 11 in Chapter 6]{DS}) } Suppose that $\{ u_k \}_{k = 1}^ {\infty}$ is a uniformly discrete sequence with separation constant $\delta$. \\
	$(a)$ If $\{ u_k \}_{k = 1}^ {\infty}$ is an $r$-net  with $0<r < \frac{1}{2}$, then $\{ u_k \}_{k = 1}^ {\infty}$ is a sampling sequence; \\
	$(b)$ If $\delta$ satisfies that
	\begin{align*}
		(2 \pi + 1) \frac{\sqrt{1 - \delta}}{(1 - \sqrt{1 - \delta})^ 2} < \frac{1}{2},
	\end{align*}
	then $\{ u_k \}_{k = 1}^ {\infty}$ is an interpolation sequence.
\end{lem}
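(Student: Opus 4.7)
Plan: The lemma bundles two independent sufficient conditions---(a) a covering/density condition for sampling and (b) a separation condition for interpolation---so I would prove them separately, modelling both on operator-theoretic arguments (Neumann series and Schur tests) for the Bergman space.

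For part (a), I would split the sampling relation
$$\int_{\mathbb D} |f|^2 \, dA \asymp \sum_{k=1}^{\infty} (1-|u_k|^2)^2 |f(u_k)|^2$$
into its two inequalities. The upper bound is the easy direction: uniform discreteness with constant $\delta$ makes the pseudo-hyperbolic disks $\{E(u_k, \delta/3)\}_k$ pairwise disjoint, so the M\"obius-invariant sub-mean-value inequality applied to $|f|^2$, combined with $A(E(u_k, \delta/3)) \asymp (1-|u_k|^2)^2$, gives
$$\sum_k (1-|u_k|^2)^2 |f(u_k)|^2 \lesssim \sum_k \int_{E(u_k, \delta/3)} |f|^2 \, dA \leqslant \|f\|_{L_a^2}^2.$$
For the lower bound (the substantive part) I would introduce the analysis operator $T^* f = ((1-|u_k|^2) f(u_k))_{k}$ from $L_a^2$ to $\ell^2$ and its adjoint, the synthesis operator, then study the frame operator $T T^*$ on $L_a^2$. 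Partitioning $\mathbb{D}$ into a measurable refinement of the covering $\{E(u_k, r)\}_k$ and applying the Bergman reproducing formula yields a decomposition $T T^* = I + R$; the hypothesis $r < 1/2$ is precisely what forces the operator-norm bound $\|R\| < 1$, so Neumann-series invertibility of $T T^*$ gives the lower sampling bound.

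For part (b), my approach is to build an interpolating function explicitly. Given data $(w_k)$ with $\sum_k (1-|u_k|^2)^2 |w_k|^2 < \infty$, I would look for a candidate of the form
$$f(z) = \sum_k c_k \, \frac{(1-|u_k|^2)^{\alpha}}{(1-\overline{u_k} z)^{\beta}}$$
for suitable exponents $\alpha, \beta$, and impose the interpolation constraints $f(u_j) = w_j$. This reduces to inverting a matrix whose diagonal is essentially constant and whose off-diagonal entries are controlled by the mutual pseudo-hyperbolic distances of the $u_k$. The precise constraint
$$(2\pi+1)\frac{\sqrt{1-\delta}}{(1-\sqrt{1-\delta})^2} < \frac{1}{2}$$
is exactly the Schur-test threshold that forces the off-diagonal part to have operator norm less than $1/2$; then the system is solvable by a Neumann series in $\ell^2$ and the resulting atomic sum converges in $L_a^2$.

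The main obstacle in both parts is obtaining sharp constants in the off-diagonal estimates. For (a), the bottleneck is the quantitative bound $\|R\| < 1$ when $r < 1/2$, which reduces to a pointwise variation estimate for $|K_z|$ on pseudo-hyperbolic disks of radius $r$ together with a careful geometric computation. For (b), one must evaluate Schur-type sums such as $\sum_{k \neq j} (1-|u_k|^2)^{\alpha}\, |1-\overline{u_j} u_k|^{-\beta}$ under uniform discreteness, tracking where the constant $2\pi+1$ (coming from a circumference-plus-endpoint estimate) enters the final threshold.
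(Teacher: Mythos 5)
The paper does not prove this lemma at all: it is quoted verbatim from Duren and Schuster (Theorems 10 and 11 in Chapter 6 of \cite{DS}), so there is no internal argument to compare yours against. Judged on its own terms, your outline does follow the standard route, and in spirit the route of the cited source: disjoint small pseudo-hyperbolic disks plus the sub-mean-value property for the easy upper sampling bound; an absorption argument for the lower bound; and a diagonally dominant linear system solved by Neumann series for interpolation, with a counting lemma for $\delta$-separated points in hyperbolic annuli producing the factor $2\pi+1$ (your ``circumference-plus-endpoint'' heuristic is essentially right: the $n$-th annulus contains at most roughly $(2\pi+1)n$ such points, and $\sum_n n t^n = t/(1-t)^2$ with $t=\sqrt{1-\delta}$ accounts for the shape of the hypothesis in $(b)$).

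The genuine gap is that in both parts the entire content of the lemma is the explicit numerical threshold, and in both parts you assert it rather than derive it. In $(a)$, the sentence ``the hypothesis $r<1/2$ is precisely what forces $\|R\|<1$'' is the theorem, not a proof of it; moreover, with the analysis operator normalized by $(1-|u_k|^2)$ your frame operator is not a perturbation of $I$, since the diagonal part of $TT^*$ is comparable to $\sum_k A(D_k) f(u_k)K_{u_k}$ with $A(D_k)\leqslant A(E(u_k,r))\asymp r^2(1-|u_k|^2)^2$, so $TT^*$ is near $c(r)I$ with $c(r)\neq 1$. The absorption has to be run with the weights $A(D_k)$ together with an explicit oscillation estimate of the type $\int_{E(u_k,r)}|f-f(u_k)|^2\,dA\leqslant C(r)\int_{E(u_k,R)}|f|^2\,dA$, and the threshold $1/2$ only emerges from computing $C(r)$; this is exactly the computation \cite{DS} carries out (via a Minkowski-type splitting rather than an operator Neumann series). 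Likewise in $(b)$ you leave $\alpha,\beta$ unspecified, do not verify that the atomic series converges in $L_a^2$, and the Schur/row-sum estimate converting the separation constant $\delta$ into the bound $(2\pi+1)\sqrt{1-\delta}\,(1-\sqrt{1-\delta})^{-2}<\tfrac12$ is precisely the missing step. So the proposal is a correct map of the terrain but not a proof; to be usable it must either carry out these two quantitative estimates in full or, as the authors do, simply cite \cite{DS}.
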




It follows immediately from  Lemma \ref{SI} that $\mu_0$ is a reverse Carleson measure for $L_a^2$ when $R\rightarrow 0^+$. On the other hand,
note that the separation constant $\delta (\Gamma_0)$ of $\Gamma_0$ is greater than or equal to $\tanh(\frac{R}{2})$.  This means that $\Gamma_0$ is an interpolation sequence  when $R$ is large enough, so $\Gamma_0$ is not a sampling sequence in this case. This yields that $\mu_0$ is not a reverse Carleson measure for $L_a^2$ if $R$ is sufficiently large.  To summarize, we have the following proposition.

\begin{prop}\label{CR}
	Suppose that $R > 0$ and $\Gamma_0 = \{ b_k \}_{k = 1}^ {\infty}$ is a sequence satisfying
	\begin{align*}
		\mathbb D = \bigcup_{k = 1}^ {\infty} {D(b_k, R)}
	\end{align*}
	and
	\begin{align*}
		\beta (b_k, b_j) \geqslant \frac{R}{2}, \ \ \ \ k \neq j.
	\end{align*}
	Let $\mu_0$ be the measure defined by
$$\mu_0 = \sum_{k = 1}^ {\infty} {(1 - |b_k|^ 2)^ 2 \delta_{b_k}}.$$
 Then we have:\\
	$(a)$  $\mu_0$ is a reverse Carleson measure for the Bergman space if  $R$ is  sufficiently close to $0$;\\
	$(b)$  $\mu_0$ is not a reverse Carleson measure for the Bergman space if  $R$ is  large enough.
\end{prop}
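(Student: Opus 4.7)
My plan is to reduce both parts, via Proposition~\ref{proposition 2}, to the question of whether the lattice $\Gamma_0$ is a sampling sequence for $L_a^2$, and then to apply the two criteria in Lemma~\ref{SI}. The geometric dictionary I need is that $\beta(z,w) = \tanh^{-1}\rho(z,w)$, so the covering hypothesis $\mathbb{D} = \bigcup_k D(b_k, R)$ is equivalent to $\Gamma_0$ being an $r$-net with $r = \tanh(R)$, and the separation $\beta(b_k,b_j) \geq R/2$ translates to the separation constant bound $\delta(\Gamma_0) \geq \tanh(R/2)$.

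For part $(a)$, since $\tanh(R) \to 0$ as $R \to 0^+$, for $R$ sufficiently close to $0$ we have $r = \tanh(R) < \tfrac{1}{2}$. Lemma~\ref{SI}$(a)$ then guarantees that $\Gamma_0$ is a sampling sequence, and Proposition~\ref{proposition 2} shows that $\mu_0$ is a reverse Carleson measure for $L_a^2$.

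For part $(b)$, the lower bound $\tanh(R/2)$ on the separation constant tends to $1$ as $R \to \infty$, so $\sqrt{1 - \delta(\Gamma_0)} \to 0$ and consequently
$$(2\pi+1)\frac{\sqrt{1-\delta(\Gamma_0)}}{(1-\sqrt{1-\delta(\Gamma_0)})^2} \longrightarrow 0.$$
For $R$ sufficiently large this quantity is less than $\tfrac{1}{2}$, so by Lemma~\ref{SI}$(b)$, $\Gamma_0$ is an interpolation sequence for $L_a^2$. To convert this into the failure of the reverse Carleson property, I would invoke the classical fact that no sequence in $\mathbb{D}$ can be simultaneously sampling and interpolating for $L_a^2$: this follows from Seip's density characterization, where the upper density of an interpolation sequence is strictly less than the critical threshold needed for sampling (as in Theorem~5.23 of \cite{HKZ}). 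Hence $\Gamma_0$ is not sampling, and Proposition~\ref{proposition 2} concludes that $\mu_0$ is not a reverse Carleson measure.

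The main obstacle is the final step: ruling out that the interpolation sequence $\Gamma_0$ could still be sampling. This is a nontrivial but standard consequence of Seip's theorem and will be handled by citation; the rest of the argument is essentially elementary manipulation of $\tanh$ together with direct applications of Lemma~\ref{SI} and Proposition~\ref{proposition 2}.
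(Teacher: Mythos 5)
Your proposal is correct and follows essentially the same route as the paper, which likewise deduces part $(a)$ from Lemma \ref{SI}$(a)$ via the $r=\tanh(R)$ dictionary and part $(b)$ from Lemma \ref{SI}$(b)$ together with the fact that an interpolation sequence for $L_a^2$ cannot be sampling, all combined with Proposition \ref{proposition 2}. If anything, you are slightly more careful than the paper, which merely asserts that interpolation excludes sampling, whereas you correctly flag this step and support it by Seip's density characterization.
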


Now we are in the position to give a negative answer to Question \ref{Q2}.

\begin{thm}\label{CE}
There exists a finite positive Borel measure $\nu$ on the open unit disk $\mathbb D$ such that the Toeplitz operator $T_{\nu}$ is not invertible on the Bergman space, but the Berezin transform of $\nu$ is bounded below on $\mathbb D$.
\end{thm}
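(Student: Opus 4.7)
The plan is to exhibit $\nu$ explicitly as the point-mass measure $\mu_0$ from Proposition \ref{CR} associated to an $R$-lattice $\Gamma_0 = \{b_k\}_{k=1}^\infty$, with the separation parameter $R$ chosen large enough to defeat the sampling property while keeping the Berezin transform tame. All the substantive work has been assembled in the lemmas and propositions above; the task is to combine them correctly.

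First I would fix $R > 0$ large enough so that Proposition \ref{CR}(b) applies, and set
\[
\nu = \mu_0 = \sum_{k=1}^{\infty} (1-|b_k|^2)^2 \,\delta_{b_k}.
\]
Since $\Gamma_0$ is a separated $r$-net with $r = \tanh(R) \in (0,1)$, Proposition \ref{proposition 1} shows that $\nu$ is a finite positive Borel measure on $\mathbb{D}$ satisfying the reverse Carleson condition (\ref{RCC}); in particular $\nu(E(z,r)) \asymp A(E(z,r))$, so $\nu$ is a Carleson measure for $L_a^2$. By Lemma \ref{BC}, the Toeplitz operator $T_\nu$ is bounded on $L_a^2$ and $\widetilde{\nu}$ is bounded on $\mathbb{D}$.

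Next, I would invoke Lemma \ref{CRC}: since $\nu$ is a Carleson measure satisfying the reverse Carleson condition, its Berezin transform $\widetilde{\nu}$ is bounded above and below on $\mathbb{D}$. This gives the positive Berezin-transform side of the theorem. On the other hand, Proposition \ref{CR}(b) asserts that for $R$ sufficiently large the measure $\mu_0 = \nu$ fails to be a reverse Carleson measure for $L_a^2$ (this is the lattice becoming too sparse to be a sampling sequence, so by Proposition \ref{proposition 2} the equality $\int |f|^2\,d\nu = \sum (1-|b_k|^2)^2|f(b_k)|^2$ cannot dominate $\|f\|^2_{L_a^2}$ from below). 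Combining this with Lemma \ref{M} — which requires $T_\nu$ to be bounded, already verified — we conclude that $T_\nu$ is not invertible on the Bergman space, completing the proof.

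The main conceptual point, rather than any delicate calculation, is recognizing that the gap the theorem identifies lies precisely between the uniform-density condition (\ref{RCC}) on $\nu$ and the sampling property of the supporting sequence $\Gamma_0$: the weights $(1-|b_k|^2)^2$ are calibrated so that the Berezin transform $\widetilde{\nu}(z) = \sum_k (1-|b_k|^2)^2 |k_z(b_k)|^2$ always sees a roughly uniform ``mass per hyperbolic disk'' near $z$, but the lattice itself can be taken so coarse that no $L_a^2$ function is controlled from below by its values on $\Gamma_0$. Thus the only non-mechanical step is the appeal to Proposition \ref{CR}(b), whose justification through Lemma \ref{SI}(b) has already been carried out above.
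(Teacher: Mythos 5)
Your proposal is correct and follows essentially the same route as the paper: take $\nu=\mu_0$ from Proposition \ref{CR}(b), use Proposition \ref{proposition 1} and Lemma \ref{CRC} to get the lower bound on $\widetilde{\nu}$, and Lemma \ref{M} to rule out invertibility since $\nu$ is not a reverse Carleson measure. Your additional remark verifying boundedness of $T_\nu$ via Lemma \ref{BC} before invoking Lemma \ref{M} is a small but welcome piece of bookkeeping that the paper leaves implicit.
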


\begin{proof}
Let $\nu$ be the positive measure constructed  in (b) of Proposition  \ref{CR}. Then we have by Proposition \ref{proposition 1} that $\nu$ satisfies the reverse Carleson condition. It follows from Lemma \ref{CRC} that $\widetilde{\nu}$ is bounded below on the unit disk $\mathbb D$.

However, Lemma \ref{M} tells us that the Toeplitz operator $T_\nu$ is not invertible on the Bergman space $L_a^2$, since $\nu$ is not a reverse Carleson measure for $L_a^2$.
\end{proof}

\section{The Douglas question on the Fock space}\label{DF}

In the final section, we show that the analogues of Lemma \ref{M}, Theorems \ref{M1} and \ref{CE} also hold in the  Fock-space setting.
Let $\mu$ be a positive Borel measure on the complex plane $\mathbb C$ satisfying condition (M) and $T_\mu$ be the Toeplitz operator induced by $\mu$ on the Fock space $\mathcal F^2$. Let us first review some characterizations of the Carleson measure for the Fock space.

\begin{lem}\label{FockC}
\emph{(\cite[Theorme 3.29]{Zhu2})} Suppose that $\mu$ is positive Borel measure on $\mathbb C$. Then the following statements are equivalent:\\
\emph{($a$)} $\mu$ is a Carleson measure for $\mathcal F^2$;\\
\emph{($b$)} There exists a constant $C > 0$ such that
	\begin{align*}
		\int_{\mathbb{C}} |f(w)| \mathrm{e}^ {-\frac{|w|^ 2}{2}} d \mu (w) \leqslant C \int_{\mathbb{C}} |f(w)| \mathrm{e}^ {-\frac{|w|^ 2}{2}} d A(w)
	\end{align*}
for every  entire function $f$;\\
\emph{($c$)} There exists a constant $C>0$ such that
$$\int_{\mathbb{C}} |k_z^ {(\mathcal{F}^ 2)} (w)|^ 2 \mathrm{e}^ {-\frac{|w|^ 2}{2}} d \mu (w) = \int_{\mathbb C} \mathrm{e}^{-\frac{|z-w|^2}{2}}d\mu(w)\leqslant C$$
for all $z\in \mathbb C$;\\
\emph{($d$)} For every $r > 0$, there exists a constant $C>0$ (depending only on $r$) such that
$$\mu(B(z, r))\leqslant C$$
for all $z\in \mathbb C$.
\end{lem}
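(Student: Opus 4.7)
The plan is to prove the equivalence by establishing the cycle $(a) \Rightarrow (c) \Rightarrow (d) \Rightarrow (b) \Rightarrow (a)$. The two underlying tools are the identity $|k_z^{(\mathcal F^2)}(w)|^2 \mathrm{e}^{-|w|^2/2} = \mathrm{e}^{-|z-w|^2/2}$ (already recorded in the definition of the Berezin transform in the introduction) and a local submean-value inequality for entire functions in the Fock weight.

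The implications $(a) \Rightarrow (c)$ and $(c) \Rightarrow (d)$ are direct. For $(a) \Rightarrow (c)$, I would test the Carleson inequality in $(a)$ with the unit vector $f = k_z^{(\mathcal F^2)}$; the identity above rewrites both sides of $(a)$ so that the conclusion becomes
\begin{align*}
\int_{\mathbb C} \mathrm{e}^{-|z-w|^2/2}\, d\mu(w) \leqslant C\int_{\mathbb C} \mathrm{e}^{-|z-w|^2/2}\, dA(w) = 2C\|k_z^{(\mathcal F^2)}\|_{\mathcal F^2}^2 = 2C.
\end{align*}
For $(c) \Rightarrow (d)$, I would use the lower bound $\mathrm{e}^{-|z-w|^2/2} \geqslant \mathrm{e}^{-r^2/2}$ on $B(z,r)$, which immediately yields $\mu(B(z,r)) \leqslant \mathrm{e}^{r^2/2} C$.

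The heart of the argument is $(d) \Rightarrow (b)$. I would first establish the pointwise submean inequality
\begin{align*}
|f(z)|\, \mathrm{e}^{-|z|^2/2} \lesssim \frac{1}{r^2}\int_{B(z,r)} |f(w)|\, \mathrm{e}^{-|w|^2/2}\, dA(w)
\end{align*}
for every entire function $f$, which follows by applying the classical submean-value property of $|g|$ to the twisted entire function $g(w) = f(w)\, \mathrm{e}^{-w\overline{z}}$ and simplifying via the elementary identity $-\mathrm{Re}(w\overline{z}) = -|w|^2/2 - |z|^2/2 + |w-z|^2/2$. Then I would fix a separated $2r$-net, for instance the lattice $\Lambda(0,r,\mathrm{i}r) = \{a_k\}_{k=1}^\infty$ from (\ref{Lambda}), so that the enlarged balls $B(a_k, 2r)$ cover $\mathbb C$ with bounded overlap. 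Splitting $\int_{\mathbb C} |f(w)|\, \mathrm{e}^{-|w|^2/2}\, d\mu(w)$ as a sum over these balls, using $(d)$ to bound each $\mu(B(a_k, 2r))$ by a uniform constant, and applying the pointwise estimate to replace the supremum on each ball by an integral over a slightly larger ball, I would sum the resulting terms using the bounded overlap to obtain $(b)$.

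For $(b) \Rightarrow (a)$, I would apply $(b)$ to $f^2$, which is entire whenever $f$ is and satisfies $|f^2| = |f|^2$; this recovers $(a)$ at once. The main obstacle I expect is the submean-value step: the function $|f(w)|\, \mathrm{e}^{-|w|^2/2}$ itself is not subharmonic, so one must introduce the twisted entire function $g(w) = f(w)\, \mathrm{e}^{-w\overline z}$ and track the Gaussian exponents carefully to trade $\mathrm{e}^{-\mathrm{Re}(w\overline z)}$ against $\mathrm{e}^{-|w|^2/2}$ on $B(z,r)$. Once this pointwise estimate is secured, the remaining covering bookkeeping is routine.
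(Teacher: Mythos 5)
Your proposal is correct: the cycle $(a)\Rightarrow(c)\Rightarrow(d)\Rightarrow(b)\Rightarrow(a)$ is complete, the twist $g(w)=f(w)\mathrm{e}^{-w\overline{z}}$ together with the identity $-\mathrm{Re}(w\overline{z})=-\tfrac{|w|^2}{2}-\tfrac{|z|^2}{2}+\tfrac{|w-z|^2}{2}$ does yield the pointwise estimate $|f(z)|\mathrm{e}^{-|z|^2/2}\lesssim r^{-2}\int_{B(z,r)}|f(w)|\mathrm{e}^{-|w|^2/2}\,dA(w)$, and the covering/bounded-overlap argument plus the substitution $f\mapsto f^2$ close the loop. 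The paper itself gives no proof of this lemma -- it is quoted verbatim with a citation to Zhu -- and your argument is essentially the standard one found in that reference, so there is nothing to reconcile.
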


The next proposition is parallel to Lemma \ref{BC}.

\begin{prop} \label{BF}
	Suppose $\mu$ is a positive Borel measure on $\mathbb{C}$ satisfying the condition (M). Then the following are equivalent:\\
	\emph{$(a)$} $T_{\mu}$ is bounded on  the Fock space $\mathcal{F}^2$; \\
	\emph{$(b)$} $\widetilde{\mu}$ is bounded on $\mathbb C$;\\
	\emph{$(c)$} $\mu$ is a Carleson measure for  $\mathcal F^2$.
\end{prop}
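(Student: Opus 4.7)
My plan is to prove the equivalences by the cyclic chain $(a) \Rightarrow (b) \Rightarrow (c) \Rightarrow (a)$, exploiting in each step a close interaction between the reproducing kernel $k_z^{(\mathcal{F}^2)}$, the Berezin transform, and Lemma \ref{FockC}. The argument parallels the Bergman-space case in Lemma \ref{BC}, but with the Gaussian weight $\tfrac{1}{2}e^{-|w|^2/2}$ playing the role of area measure.

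For $(a) \Rightarrow (b)$, I would compute the matrix coefficient of $T_\mu$ against the normalized reproducing kernels. Using the definition of $T_\mu$ and the reproducing property, one readily verifies
\begin{align*}
\langle T_\mu k_z^{(\mathcal{F}^2)}, k_z^{(\mathcal{F}^2)} \rangle_{\mathcal{F}^2} = \frac{1}{2}\int_{\mathbb{C}} |k_z^{(\mathcal{F}^2)}(w)|^2 e^{-|w|^2/2} d\mu(w) = \widetilde{\mu}(z),
\end{align*}
so that $\widetilde{\mu}(z) \leqslant \|T_\mu\|$ for every $z \in \mathbb{C}$, since $\|k_z^{(\mathcal{F}^2)}\|_{\mathcal{F}^2} = 1$. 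For $(b) \Rightarrow (c)$, I would use the explicit formula $\widetilde{\mu}(z) = \tfrac{1}{2}\int_{\mathbb{C}} e^{-|z-w|^2/2}\, d\mu(w)$ given in the introduction. For any fixed $r > 0$ and any $w \in B(z,r)$ we have $e^{-|z-w|^2/2} \geqslant e^{-r^2/2}$, hence
\begin{align*}
\widetilde{\mu}(z) \geqslant \tfrac{1}{2} e^{-r^2/2}\, \mu(B(z,r)),
\end{align*}
so the uniform boundedness of $\widetilde{\mu}$ forces $\mu(B(z,r))$ to be uniformly bounded in $z$, and criterion $(d)$ of Lemma \ref{FockC} delivers the Carleson property.

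The main obstacle is the implication $(c) \Rightarrow (a)$, where one must pass from a scalar integral estimate to a norm bound on $T_\mu$. My strategy is to estimate sesquilinear forms: for entire $f, g$ in a dense subspace of $\mathcal{F}^2$ (e.g.\ polynomials, on which the condition (M) makes everything converge), Fubini applied to the integral representation of $T_\mu$ gives
\begin{align*}
\langle T_\mu f, g \rangle_{\mathcal{F}^2} = \frac{1}{2}\int_{\mathbb{C}} f(w)\overline{g(w)}\, e^{-|w|^2/2}\, d\mu(w).
\end{align*}
Cauchy--Schwarz then yields
\begin{align*}
|\langle T_\mu f, g \rangle_{\mathcal{F}^2}| \leqslant \left(\tfrac{1}{2}\int_{\mathbb{C}} |f|^2 e^{-|w|^2/2} d\mu\right)^{1/2} \left(\tfrac{1}{2}\int_{\mathbb{C}} |g|^2 e^{-|w|^2/2} d\mu\right)^{1/2},
\end{align*}
and invoking the Carleson bound in the definition (applied to $|f|^2$ and $|g|^2$, both of which are moduli squared of entire functions in $\mathcal{F}^2$) gives $|\langle T_\mu f, g\rangle_{\mathcal{F}^2}| \leqslant C\|f\|_{\mathcal{F}^2}\|g\|_{\mathcal{F}^2}$. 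Extending by density to all of $\mathcal{F}^2$ yields the boundedness of $T_\mu$ with $\|T_\mu\| \leqslant C$.

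The subtle point throughout is the interchange of integrals and the choice of a dense subspace on which both sides of the Fubini identity converge absolutely; condition (M) is precisely what ensures this for each $z \in \mathbb{C}$, and the Carleson hypothesis upgrades it to uniform control. Once these technicalities are handled, the three implications close the circle and complete the proof.
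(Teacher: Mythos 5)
Your proof is correct, and the first two implications coincide with the paper's up to cosmetic differences: $(a)\Rightarrow(b)$ is the same Cauchy--Schwarz computation with normalized reproducing kernels, and for $(b)\Rightarrow(c)$ you pass through the ball condition $(d)$ of Lemma \ref{FockC}, whereas the paper simply observes that $(b)$ is verbatim condition $(c)$ of that lemma; both are one-line appeals to the same result. The genuine divergence is in $(c)\Rightarrow(a)$. The paper proves a pointwise domination $|T_\mu f(z)|\lesssim S(|f|)(z)$, where $S$ is the integral operator \eqref{opS} with kernel $|K_z^{(\mathcal F^2)}(w)|$, and then quotes the boundedness of $S$ on $L^2(\mathbb C,d\lambda)$ from Zhu's book. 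You instead establish the sesquilinear identity $\langle T_\mu f,g\rangle_{\mathcal F^2}=\tfrac12\int_{\mathbb C} f(w)\overline{g(w)}\,\mathrm{e}^{-|w|^2/2}\,d\mu(w)$ --- which is exactly the paper's later Lemma \ref{IntegralF}, proved there only after boundedness is already known --- and conclude by Cauchy--Schwarz together with two applications of the Carleson embedding. Your route is arguably cleaner, since it makes the positivity of the form transparent and derives the final norm bound without the Schur-test operator; but it is not entirely free of that input: justifying the Fubini interchange requires absolute convergence of the double integral, and condition (M) alone does not supply it (it only gives pointwise finiteness of $\widetilde\mu$, which is compatible with $\int_{\mathbb C}|f(w)|\,\mathrm{e}^{-|w|^2/4}\,d\mu(w)=\infty$ even for polynomials $f$). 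Under the Carleson hypothesis this is harmless --- e.g.\ $\mu(B(u,1))$ is uniformly bounded by Lemma \ref{FockC}$(d)$, so the double integral converges for polynomial $f,g$ --- and one should also record the routine step that the bounded operator produced by the form agrees with $T_\mu$ on polynomials, by testing the identity against the reproducing kernels $K_z^{(\mathcal F^2)}$. With those two details made explicit, your argument is complete and constitutes a valid alternative to the paper's proof.
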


\begin{proof}
	This  can be proved by using the same technique as the one used in the proof of Lemma \ref{BC}. However, we shall include a detailed  proof here for the sake of completeness.
	
	We first show $(a)\Rightarrow (b)$.  Suppose that $T_\mu$ is bounded on the Fock space, then the Cauchy-Schwarz inequality gives
	$$\widetilde{\mu}(z)=\frac{1}{2}\int_{\mathbb C} |k_z^{(\mathcal F^2)}(w)|^2 \mathrm{e}^ {-\frac{|w|^ 2}{2}} d\mu(w)=\big|\langle T_\mu k_z^{(\mathcal F^2)}, k_z^{(\mathcal F^2)} \rangle_{\mathcal{F}^ 2} \big|\leqslant \|T_\mu\|, \ \ \ \ z\in \mathbb C, $$
	where the second equality comes from the property of  reproducing kernels for $\mathcal F^2$.
	
	Note that $(b)\Rightarrow (c)$ is a direct consequence of  ``$(c)\Rightarrow (a)$" in  Lemma \ref{FockC}.  It remains to show that $(c)\Rightarrow (a)$. If $\mu$ is a Carleson measure for $\mathcal F^2$, then for any analytic polynomial $f$  we have  by  Lemma \ref{FockC} that
	\begin{align*}
		|T_{\mu} f (z)| & \leqslant  \frac{1}{2} \int_{\mathbb{C}} |f(w)|~|K^{(\mathcal F^2)}_w(z)| \mathrm{e}^ {-\frac{|w|^ 2}{2}} d \mu (w) \\
		& \lesssim  \frac{1}{2} \int_{\mathbb{C}} |f(w) K^{(\mathcal F^2)}_w(z)| \mathrm{e}^ {-\frac{|w|^ 2}{2}} d A(w) \\
		& =  \int_{\mathbb{C}} |f(w) K^{(\mathcal F^2)}_w(z)| d \lambda (w) \\
		& =  S (|f|) (z), \ \ \ \ z\in \mathbb C,
	\end{align*}
	where  $S$ is the linear operator defined by
	\begin{align}\label{opS}
		S h (z) = \int_{\mathbb{C}} h(w) |K_z^ {(\mathcal{F}^ 2)} (w)|  d \lambda (w), \ \ \ \ h \in L^ 2 (\mathbb{C}, d\lambda).
	\end{align}
According to \cite[Theorem 2.20]{Zhu2}, the operator $S$ is bounded on $L^ 2 (\mathbb{C}, d \lambda)$. Then the boundedness of $T_{\mu}$  follows from the boundedness of $S$, to complete the proof.
\end{proof}

The following identity is quite useful in the study of the bounded Fock-Toeplitz operators with measures as symbols.

\begin{lem}\label{IntegralF}
	Suppose that $\mu$ is a positive Borel measure satisfying condition  (M) and $T_{\mu}$ is bounded on the Fock space $\mathcal F^2$. Then
	\begin{align*}
		\langle T_{\mu} f, g \rangle_{\mathcal{F}^ 2} = \frac{1}{2} \int_{\mathbb{C}} f (z) \overline{g(z)} \mathrm{e}^ {-\frac{|z|^ 2}{2}} d \mu (z)
	\end{align*}
	for $f$ and  $g$ in $\mathcal{F}^2$.
\end{lem}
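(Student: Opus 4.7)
The plan is to verify the identity on the dense subspace of $\mathcal F^2$ spanned by reproducing kernels, and then extend by continuity once both sides have been shown to define bounded sesquilinear forms on $\mathcal F^2 \times \mathcal F^2$.

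First, I would check the identity for $f = K_a^{(\mathcal F^2)}$ and $g = K_b^{(\mathcal F^2)}$ with $a, b \in \mathbb C$. By the reproducing property of $K_b^{(\mathcal F^2)}$,
$$\langle T_\mu K_a^{(\mathcal F^2)}, K_b^{(\mathcal F^2)}\rangle_{\mathcal F^2} = (T_\mu K_a^{(\mathcal F^2)})(b),$$
and plugging into the integral definition of $T_\mu$ gives
$$(T_\mu K_a^{(\mathcal F^2)})(b) = \frac{1}{2}\int_{\mathbb C} K_a^{(\mathcal F^2)}(w) \overline{K_b^{(\mathcal F^2)}(w)}\, \mathrm{e}^{-\frac{|w|^2}{2}} d\mu(w),$$
which is exactly the right-hand side of the claimed identity for this choice of $f, g$. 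Absolute convergence here is immediate from condition (M) together with the elementary bound $|K_a^{(\mathcal F^2)}(w) K_b^{(\mathcal F^2)}(w)| \leq \tfrac{1}{2}\bigl(|K_a^{(\mathcal F^2)}(w)|^2 + |K_b^{(\mathcal F^2)}(w)|^2\bigr)$.

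Next, I would check that both sides define bounded sesquilinear forms on $\mathcal F^2 \times \mathcal F^2$. The left-hand side is bounded by $\|T_\mu\|\,\|f\|_{\mathcal F^2}\|g\|_{\mathcal F^2}$ since $T_\mu$ is assumed bounded. For the right-hand side, Proposition \ref{BF} gives that $\mu$ is a Carleson measure for $\mathcal F^2$, and then the Cauchy--Schwarz inequality together with the Carleson estimate yields
$$\left|\frac{1}{2}\int_{\mathbb C} f(z)\overline{g(z)}\, \mathrm{e}^{-\frac{|z|^2}{2}} d\mu(z)\right| \leq \frac{C}{2}\,\|f\|_{\mathcal F^2}\|g\|_{\mathcal F^2}$$
for all $f, g \in \mathcal F^2$. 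Since the linear span of $\{K_a^{(\mathcal F^2)} : a \in \mathbb C\}$ is dense in $\mathcal F^2$ (any $h \in \mathcal F^2$ orthogonal to every $K_a^{(\mathcal F^2)}$ satisfies $h(a) = \langle h, K_a^{(\mathcal F^2)}\rangle_{\mathcal F^2} = 0$ for all $a$), the identity extends from reproducing kernels to arbitrary $f, g \in \mathcal F^2$ by sesquilinearity and continuity.

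The only delicate point is confirming that both sides of the identity are well-defined for general $f, g \in \mathcal F^2$: condition (M) handles the pointwise computation on reproducing kernels, while the Carleson-measure characterization in Proposition \ref{BF} controls the integral on the right in the general case. An alternative route is a direct Fubini--Tonelli argument applied to polynomials, using $\overline{K_z^{(\mathcal F^2)}(w)} = K_w^{(\mathcal F^2)}(z)$ to reduce the inner integral via the reproducing property; this works as well but requires a separate verification of absolute convergence of the double integral by means of polynomial growth estimates together with condition (M).
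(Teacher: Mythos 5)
Your proof is correct, but it takes a genuinely different route from the paper's. The paper works with general $f,g\in\mathcal F^2$ from the outset: it writes $\langle T_\mu f,g\rangle_{\mathcal F^2}$ as a double integral, verifies absolute convergence by invoking Proposition \ref{BF} and Lemma \ref{FockC} to dominate the inner integral by $S(|f|)(z)$ (with $S$ the bounded positive integral operator of (\ref{opS})), and then applies Fubini's theorem together with the reproducing property to collapse the $d\lambda(z)$ integral. You instead verify the identity only on reproducing kernels --- where it is an immediate consequence of the reproducing property and the integral definition of $T_\mu$, with absolute convergence supplied directly by condition (M) --- and then extend by density, after checking that both sides are bounded sesquilinear forms (the left by $\|T_\mu\|$, the right by Cauchy--Schwarz plus the Carleson estimate from Proposition \ref{BF}). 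Both arguments lean on the same external input (boundedness of $T_\mu$ forces $\mu$ to be a Carleson measure), but they use it at different points. Your approach has the advantage of sidestepping a point the paper treats somewhat implicitly, namely that the integral formula defining $T_\mu$ on a dense subspace actually represents the bounded extension pointwise for arbitrary $f\in\mathcal F^2$; on reproducing kernels the formula is unambiguous, and continuity does the rest. The paper's Fubini computation is more direct and yields the pointwise integral representation as a byproduct. One small point worth making explicit in your write-up: two bounded sesquilinear forms agreeing on $D\times D$ for a dense $D$ agree everywhere (fix one variable in $D$, extend in the other by continuity, then repeat); this is standard but is the step that completes your argument.
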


\begin{proof}
	Using  the integral expression of $T_{\mu} f$, we have
	\begin{align*}
		\langle T_{\mu} f, g \rangle_{\mathcal{F}^2} = \frac{1}{2} \int_{\mathbb{C}} \int_{\mathbb{C}} f(w) \overline{g(z)} K_w^{(\mathcal F^2)}(z) \mathrm{e}^ {-\frac{|w|^ 2}{2}} d \mu (w) d \lambda (z).
	\end{align*}
	In order to use Fubini's theorem to finish the proof, we need to  show that
	\begin{align*}
		\int_{\mathbb{C}} \int_{\mathbb{C}} |f(w)|~|g(z)|~|K_w^{(\mathcal F^2)}(z)| \mathrm{e}^ {-\frac{|w|^2}{2}} d \mu (w) d \lambda (z)<+\infty.
	\end{align*}
	Indeed, since $T_{\mu}$ is bounded, $\mu$ is a Carleson measure by Proposition \ref{BF}. According to Lemma \ref{FockC}, there exists a positive constant $C$ such that
	\begin{align*}
		\int_{\mathbb{C}} |f(w)|~|K_w^{(\mathcal F^2)}(z)| \mathrm{e}^ {-\frac{|w|^ 2}{2}} d \mu (w) & \leqslant  C \int_{\mathbb{C}} |f(w) K_w^{(\mathcal F^2)}(z)| \mathrm{e}^ {-\frac{|w|^ 2}{2}} d A(w) \\
		& = 2 C \int_{\mathbb{C}} |f(w) K_w^{(\mathcal F^2)}(z)| d \lambda (w) \\
		& = 2 C S (|f|) (z),
	\end{align*}
	where $S$ is the bounded operator defined in (\ref{opS}). Noting that both $g$ and $|f|$ are in $L^ 2 (\mathbb{C}, d \lambda)$, we arrive at
	\begin{align*}
		\int_{\mathbb{C}} \int_{\mathbb{C}} |f(w)|~|g(z)|~|K_w^{(\mathcal F^2)}(z)| \mathrm{e}^ {-\frac{|w|^ 2}{2}} d \mu (w) d \lambda (z) & \leqslant 2 C \int_{\mathbb{C}} |g(z)|~S(|f|) (z) d \lambda (z) \\
		&<+\infty.
	\end{align*}
Applying Fubini's theorem, we obtain that
	\begin{align*}
		\langle T_{\mu} f, g \rangle_{\mathcal{F}^ 2} & = \int_{\mathbb{C}} T_{\mu} f(z) \overline{g(z)} d \lambda (z) \\
		& = \frac{1}{2} \int_{\mathbb{C}} \int_{\mathbb{C}} f(w) \overline{g(z)} K_w^{(\mathcal F^2)}(z) \mathrm{e}^ {-\frac{|w|^ 2}{2}} d \mu (w) d \lambda (z) \\
		& = \frac{1}{2} \int_{\mathbb{C}} f(w) \mathrm{e}^ {-\frac{|w|^ 2}{2}} \int_{\mathbb{C}} \overline{g(z)} K_w^{(\mathcal F^2)}(z) d \lambda (z) d \mu (w) \\
		& = \frac{1}{2} \int_{\mathbb{C}} f(w) \overline{g(w)} \mathrm{e}^ {-\frac{|w|^ 2}{2}} d \mu (w),
	\end{align*}
	as desired.
\end{proof}

Now we can establish the analogue of Lemma \ref{M} on the Fock space by using Lemma \ref{IntegralF} and Proposition \ref{BF}.

\begin{thm}\label{MF}
Assume that $\mu$ is a positive Borel measure on $\mathbb C$ satisfying the condition (M). If $T_{\mu}$ is bounded on the Fock space $\mathcal F^2$,  then $T_{\mu}$ is invertible  on $\mathcal F^2$  if and only if $\mu$ is a reverse Carleson measure for $\mathcal F^2$.
\end{thm}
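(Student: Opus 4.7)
The strategy is to mirror the proof of the Bergman analogue (Lemma~\ref{M}) by converting the invertibility of $T_\mu$ into a quadratic-form lower bound via Lemma~\ref{IntegralF}, and then recognizing that bound as precisely the reverse Carleson condition for $\mathcal F^2$. Setting $g = f$ in Lemma~\ref{IntegralF} yields
$$
\langle T_\mu f, f \rangle_{\mathcal F^2} \;=\; \frac{1}{2}\int_{\mathbb C} |f(w)|^2 \mathrm{e}^{-\frac{|w|^2}{2}}\, d\mu(w), \qquad f \in \mathcal F^2,
$$
so that $T_\mu \geqslant 0$; and the symmetry $\langle T_\mu f, g \rangle_{\mathcal F^2} = \overline{\langle T_\mu g, f \rangle_{\mathcal F^2}}$, which is visible from the same integral representation, shows that $T_\mu$ is self-adjoint on $\mathcal F^2$.

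Next, since
$$
\|f\|_{\mathcal F^2}^2 \;=\; \int_{\mathbb C} |f(w)|^2\, d\lambda(w) \;=\; \frac{1}{2}\int_{\mathbb C} |f(w)|^2 \mathrm{e}^{-\frac{|w|^2}{2}}\, dA(w),
$$
the reverse Carleson inequality for $\mathcal F^2$, namely
$$
\int_{\mathbb C} |f(w)|^2 \mathrm{e}^{-\frac{|w|^2}{2}}\, d\mu(w) \;\geqslant\; C \int_{\mathbb C} |f(w)|^2 \mathrm{e}^{-\frac{|w|^2}{2}}\, dA(w), \qquad f \in \mathcal F^2,
$$
is equivalent to the existence of a constant $c > 0$ such that
$$
\langle T_\mu f, f \rangle_{\mathcal F^2} \;\geqslant\; c\, \|f\|_{\mathcal F^2}^2, \qquad f \in \mathcal F^2.
$$
Because $T_\mu$ is a bounded, non-negative, self-adjoint operator on $\mathcal F^2$, standard functional analysis yields that $T_\mu$ is invertible if and only if it is bounded below, which in turn is equivalent to the quadratic-form lower bound displayed above. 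Indeed, $\langle T_\mu f, f\rangle \geqslant c\|f\|^2$ implies $\|T_\mu f\| \geqslant c\|f\|$ by the Cauchy--Schwarz inequality, while the reverse implication follows from the spectral theorem applied to the positive self-adjoint operator $T_\mu$ (or, equivalently, from considering $T_\mu^{1/2}$). Chaining these equivalences establishes the theorem.

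The real work has already been packaged into Lemma~\ref{IntegralF}, so the proof itself is short and presents no substantive obstacle. The only point that might require a brief comment is the self-adjointness of the bounded extension of $T_\mu$ from its initial dense domain to all of $\mathcal F^2$; this follows immediately from the symmetric form of the integral identity in Lemma~\ref{IntegralF}, since symmetry of a bounded sesquilinear form on a dense subspace extends by continuity.
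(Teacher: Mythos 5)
Your proof is correct. The necessity direction coincides with the paper's: both use Lemma \ref{IntegralF} to identify $\langle T_\mu f, f\rangle_{\mathcal F^2}$ with $\frac{1}{2}\int_{\mathbb C}|f|^2\mathrm{e}^{-|w|^2/2}\,d\mu$ and read off the reverse Carleson inequality from $T_\mu\geqslant\delta I$. Where you diverge is the sufficiency direction. The paper takes a duality route: it first combines the reverse Carleson bound with the Carleson bound (coming from boundedness of $T_\mu$) to get the norm equivalence $\|f\|_{\mathcal F^2}\asymp\|f\|_{L^2(\mathbb C,d\mu_0)}$, views $\mathcal F^2$ as a closed subspace of $L^2(\mathbb C,d\mu_0)$, and uses the orthogonal projection $Q$ of that space onto $\mathcal F^2$ to estimate $\|f\|_{\mathcal F^2}\lesssim\|T_\mu f\|_{\mathcal F^2}$, after which positivity gives invertibility. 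You instead observe that $T_\mu$ is a bounded positive self-adjoint operator, so invertibility is equivalent to the quadratic-form lower bound $\langle T_\mu f,f\rangle\geqslant c\|f\|_{\mathcal F^2}^2$, which by Lemma \ref{IntegralF} is verbatim the reverse Carleson condition. This is a genuinely shorter and cleaner argument for this Hilbert-space setting, and it makes explicit that the paper is already using the same spectral fact ($T_\mu$ invertible and positive implies $T_\mu\geqslant\delta I$) in the necessity direction, so no new machinery is being smuggled in. The paper's duality argument buys a template that survives in situations where one cannot lean on self-adjointness or on the equality of the quadratic form with the $L^2(\mu_0)$ norm (e.g.\ $L^p$ analogues), but for the theorem as stated your route is complete and arguably preferable.
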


\begin{proof}
For simplicity, we denote
\begin{align*}
			d \mu_0(w) = \frac{1}{2} \mathrm{e}^ {-\frac{|w|^ 2}{2}} d \mu (w), \ \ \ \  w\in \mathbb C.
\end{align*}
If $T_{\mu}$ is invertible on $\mathcal F^2$, then we can find a positive constant $\delta$  such that $T_{\mu}\geqslant \delta I$. It follows from Lemma  \ref{IntegralF} that
		\begin{align*}
			\delta \| f \|_{\mathcal{F}^ 2}^ 2 \leqslant  \langle T_{\mu} f, f \rangle_{\mathcal{F}^ 2} = \langle f, f \rangle_{L^ 2 (\mathbb{C}, d \mu_0)} = \| f \|_{L^ 2 (\mathbb{C}, d \mu_0)}^ 2, \ \ \ \ \ f \in \mathcal{F}^ 2.
		\end{align*}
This proves the necessity. 		

Conversely, assume that $\mu$ is a reverse Carleson measure for $\mathcal F^2$. Since $T_\mu$ is bounded on $\mathcal F^2$, we have by Proposition \ref{BF} that $\mu$ is also a Carleson measure for $\mathcal F^2$. This yields that
		\begin{align}\label{eq2}
			\| f \|_{\mathcal{F}^ 2} \asymp \| f \|_{L^ 2 (\mathbb{C}, d \mu_0)}, \qquad f \in \mathcal{F}^ 2.
		\end{align}
Furthermore, we have that $\mathcal{F}^2$ is a closed subspace of $L^ 2(\mathbb{C}, d \mu_0)$. Let $Q$ be the orthogonal projection from $L^2 (\mathbb{C}, d \mu_0)$ onto $\mathcal{F}^2$. Then (\ref{eq2}) gives
		\begin{align*}
			\| f \|_{\mathcal{F}^ 2} \asymp \sup \left\{ |\langle f, g \rangle_{L^2 (\mathbb{C}, d \mu_0)} | : g \in L^2 (\mathbb{C}, d \mu_0) \ \ \ \mathrm{and}\ \  \ \| g \|_{L^2 (\mathbb{C}, d \mu_0)} \leqslant  1 \right\}
		\end{align*}
for any $f\in\mathcal{F}^2$.

Let  $g$  be in $L^ 2 (\mathbb{C}, d \mu_0)$ with $\| g \|_{L^ 2 (\mathbb{C}, d \mu_0)} \leqslant  1$ and $h=Qg$. It follows from Lemma \ref{IntegralF} and (\ref{eq2}) that
		\begin{align*}
			|\langle f, g \rangle_{L^ 2 (\mathbb{C}, d \mu_0)} | & = |\langle f, h \rangle_{L^ 2 (\mathbb{C}, d \mu_0)}| = |\langle T_{\mu} f, h \rangle_{\mathcal{F}^ 2}| \\
			& \leqslant \| T_{\mu} f \|_{\mathcal{F}^ 2} \| h \|_{\mathcal{F}^ 2} \asymp \| T_{\mu} f \|_{\mathcal{F}^ 2} \| h \|_{L^ 2 (\mathbb{C}, d \mu_0)} \\
			& \leqslant  \| T_{\mu} f \|_{\mathcal{F}^ 2}.
		\end{align*}
		Therefore
		\begin{align*}
			\| f \|_{\mathcal{F}^ 2} \lesssim \| T_{\mu} f \|_{\mathcal{F}^ 2}, \ \ \ \  f \in \mathcal{F}^ 2,
		\end{align*}
which implies that the positive Toeplitz operator $T_{\mu}$ is invertible on $\mathcal F^2$.
\end{proof}

The next proposition shows that a Carleson measure for the Fock space has the nice property similar to the conclusion in Lemma \ref{Lim} on the Bergman space.
\begin{prop} \label{LimF}
If $\mu$ is a Carleson measure for the Fock space $\mathcal F^2$, then
\begin{align*}
	\lim_{R \rightarrow +\infty} {\sup_{z \in \mathbb{C}} {\int_{\mathbb{C} \setminus B(z, R)} \mathrm{e}^ {-\frac{|z - w|^ 2}{2}} d \mu (w)}} = 0.
\end{align*}
\end{prop}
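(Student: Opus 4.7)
The plan is to exploit the Carleson-measure hypothesis via part $(d)$ of Lemma \ref{FockC}, which yields a uniform bound $\mu(B(w,1)) \leqslant C_0$ for every $w \in \mathbb{C}$, and then use the rapid Gaussian decay of the kernel $\mathrm{e}^{-|z-w|^2/2}$ to make the tail of the integral small uniformly in $z$.

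First I would fix a separated sequence $\{w_k\}_{k=1}^\infty \subset \mathbb{C}$ (for instance, the lattice $\Lambda(0,\tfrac{1}{2},\tfrac{\mathrm{i}}{2})$ from Section \ref{2}) so that the associated unit balls $\{B(w_k,1)\}_{k=1}^\infty$ cover $\mathbb{C}$ with bounded overlap. By Lemma \ref{FockC}$(d)$, there is a constant $C_0>0$, independent of $k$, such that $\mu(B(w_k,1)) \leqslant C_0$. Then, for $R>2$, I would decompose
\begin{align*}
\int_{\mathbb{C}\setminus B(z,R)} \mathrm{e}^{-|z-w|^2/2}\, d\mu(w) \leqslant \sum_{k\colon B(w_k,1)\cap(\mathbb{C}\setminus B(z,R))\neq \emptyset} \int_{B(w_k,1)} \mathrm{e}^{-|z-w|^2/2}\, d\mu(w).
\end{align*}
For any $w\in B(w_k,1)$, the triangle inequality gives $|z-w|\geqslant |z-w_k|-1$, and each contributing index $k$ satisfies $|z-w_k|>R-1$. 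Hence
\begin{align*}
\int_{\mathbb{C}\setminus B(z,R)} \mathrm{e}^{-|z-w|^2/2}\, d\mu(w) \leqslant C_0\sum_{k\colon |z-w_k|>R-1} \mathrm{e}^{-(|z-w_k|-1)^2/2}.
\end{align*}

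The last step is a uniform lattice-counting estimate: since $\{w_k\}$ is separated, the cardinality of $\{k\colon |z-w_k|\in [n,n+1)\}$ is at most $K(n+1)$ for some constant $K$ that depends only on the separation parameters of the lattice, and in particular not on $z$. Grouping the sum by such annuli gives
\begin{align*}
C_0\sum_{k\colon |z-w_k|>R-1} \mathrm{e}^{-(|z-w_k|-1)^2/2} \leqslant C_0 K\sum_{n\geqslant \lfloor R-1\rfloor} (n+1)\,\mathrm{e}^{-(n-1)^2/2},
\end{align*}
and the right-hand side tends to $0$ as $R\to +\infty$ because the Gaussian decay dominates the polynomial growth; crucially the bound is independent of $z\in\mathbb{C}$, which gives the desired uniform conclusion.

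The main potential obstacle is ensuring that the geometric bookkeeping (covering and counting) is genuinely uniform in $z$; this is handled by using a translation-invariant lattice, for which the counts $|\{k\colon |z-w_k|\leqslant n\}| = O(n^2)$ hold with constants depending only on the lattice, not on the center $z$. All the remaining estimates are standard Gaussian tail bounds.
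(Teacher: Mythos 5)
Your proposal is correct and follows essentially the same route as the paper's proof: the paper tiles the plane by unit squares $Q_{m,n}$ (each contained in a disk $B(u,1)$ of uniformly bounded $\mu$-measure by Lemma \ref{FockC}$(d)$), groups the squares meeting $\mathbb{C}\setminus B(z,R)$ into annuli of width one, counts $O(R+j)$ squares per annulus, and sums the Gaussian tail, exactly as you do with unit balls centered at a translation-invariant lattice. The only difference is cosmetic (balls on a half-integer lattice versus unit squares), and your estimates are all valid and uniform in $z$.
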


\begin{proof}
	 Fix $R> 2 \sqrt{2}$. For $m, n\in \mathbb Z$, we define
\begin{align*}
			Q_{m, n} = \left\{ z \in \mathbb{C} : m \leqslant \mathrm{Re}(z) \leqslant  m + 1, \ n \leqslant  \mathrm{Im}(z)\leqslant n + 1 \right\}.
		\end{align*}
		Denote $\{Q_{m, n}: m, n\in \mathbb Z\}$ by  $\{ Q_k \}_{k = 1}^ {\infty}$ and define
		\begin{align*}
			I = I (z, R) = \left\{ k \geqslant 1: Q_k \nsubseteq B(z, R) \right\}.
		\end{align*}
		It follows that
		\begin{align*}
			\int_{\mathbb{C} \setminus B(z, R)} \mathrm{e}^ {-\frac{|z - w|^ 2}{2}} d \mu (w) = \sum_{k \in I} {\int_{Q_k \setminus B(z, R)} \mathrm{e}^ {-\frac{|z - w|^ 2}{2}} d \mu (w)} \leqslant \sum_{k \in I} {\int_{Q_k} \mathrm{e}^ {-\frac{|z - w|^ 2}{2}} d \mu (w)}.
		\end{align*}
		
Let $H(z, \alpha, \beta)$ with $0 < \alpha < \beta$ be the annulus
		\begin{align*}
			\left\{ w \in \mathbb{C} : \alpha \leqslant  |w-z|\leqslant \beta \right\}.
		\end{align*}
		If we set
		\begin{align*}
			I_j = I_j (z, R) = \left\{ k \geqslant 1: Q_k \cap H(z, R + j, R + j + 1) \neq \varnothing \right\}, \ \ \  j = 0, 1, 2, \cdots,
		\end{align*}
		then clearly $I = \bigcup\limits_{j = 0}^ {\infty} {I_j}$ and
		\begin{align*}
			\int_{\mathbb{C} \setminus B(z, R)} \mathrm{e}^ {-\frac{|z - w|^ 2}{2}} d \mu (w) \leqslant \sum_{j = 0}^ {\infty} {\sum_{k \in I_j} {\int_{Q_k} \mathrm{e}^ {-\frac{|z - w|^ 2}{2}} d \mu (w)}}.
		\end{align*}
	
For each $k \in I_j$, since $Q_k \cap H(z, R + j, R + j + 1) \neq \varnothing$, by the triangle inequality we have
		\begin{align}
			R + j - \sqrt{2} \leqslant |w - z| \leqslant  R + j + 1 + \sqrt{2}, \qquad w \in Q_k. \label{ineq7}
		\end{align}
This implies that
		\begin{align*}
			Q_k \subseteq H(z, R + j - \sqrt{2}, R + j + 1 + \sqrt{2})
		\end{align*}
when $k \in I_j$, which yields that
		\begin{align}
			\mathrm{card} (I_j) & \leqslant  A\big(H(z, R + j - \sqrt{2}, R + j + 1 + \sqrt{2})\big) \notag \\
			& \leqslant \pi (1 + 2 \sqrt{2}) (2 R + 2 j + 1) < 60 (R + j). \label{ineq8}
		\end{align}
		Moreover, we obtain by (\ref{ineq7}) that
		\begin{align*}
			\int_{Q_k} \mathrm{e}^ {-\frac{|z - w|^ 2}{2}} d \mu (w) \leqslant \mathrm{e}^ {-\frac{(R + j - \sqrt{2})^ 2}{2}} \mu (Q_k),\ \ \ \  \ k\in I_j.
		\end{align*}
		Since each cube $Q_k$ is contained in some disk $B (u, 1)$ and  $\mu$ is a Carleson measure for $\mathcal F^2$, Lemma \ref{FockC} tells us that there is a constant $C>0$ such that
		\begin{align*}
			\mu (Q_k)\leqslant  \mu (B(u, 1)) \leqslant C.
		\end{align*}
		Thus we have
		\begin{align*}
			\int_{Q_k} \mathrm{e}^ {-\frac{|z - w|^ 2}{2}} d \mu (w) \leqslant C \mathrm{e}^ {-\frac{(R + j - \sqrt{2})^ 2}{2}},\ \ \ \  \ k\in I_j.
		\end{align*}
		It follows from (\ref{ineq8}) that
		\begin{align*}
			\sum_{k \in I_j} {\int_{Q_k} \mathrm{e}^ {-\frac{|z - w|^ 2}{2}} d \mu (w)} & \leqslant  C \sum_{k \in I_j} {\mathrm{e}^ {-\frac{(R + j - \sqrt{2})^ 2}{2}}}\\
			& \leqslant 60 C (R + j) \mathrm{e}^ {-\frac{(R + j - \sqrt{2})^ 2}{2}}.
		\end{align*}
Now we arrive at
		\begin{align*}
			\int_{\mathbb{C} \setminus B(z, R)} \mathrm{e}^ {-\frac{|z - w|^ 2}{2}} d \mu (w) & \leqslant 60 C \sum_{j = 0}^ {\infty} {(R + j) \mathrm{e}^ {-\frac{(R + j - \sqrt{2})^ 2}{2}}} \\
			& \leqslant 60 C \sum_{j = 0}^ {\infty} {(R + j) \mathrm{e}^ {-\frac{(R + j)^ 2}{8}}} \\
			& \leqslant 60 C \sum_{j = 0}^ {\infty} {(R + j) \mathrm{e}^ {-\frac{R^ 2 + j^ 2}{8}}} \\
			& \lesssim \mathrm{e}^ {-\frac{R^ 2}{8}} \left(  R \sum_{j = 0}^ {\infty} {\mathrm{e}^ {-\frac{j^ 2}{8}}} + \sum_{j = 0}^ {\infty} {j \mathrm{e}^ {-\frac{j^ 2}{8}}} \right),
		\end{align*}
where the second inequality is due to $R>2\sqrt{2}$. This completes the proof of Proposition \ref{LimF}.
	\end{proof}

According to Proposition \ref{LimF}, we obtain the following characterization of a Carleson measure for the Fock space that satisfies the  reverse Carleson condition, which is parallel to Lemma \ref{CRC} in the setting of the Bergman space.

\begin{thm}\label{CRCF}
 A Carleson measure $\mu$  for the Fock space $\mathcal F^2$ satisfies the  reverse Carleson condition if and only if the Berezin transform $\widetilde{\mu}$ is bounded above and below on $\mathbb C$.
\end{thm}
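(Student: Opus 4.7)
The plan is to split the stated equivalence into the two implications, and to observe first that both upper bounds come essentially for free from the Carleson hypothesis: the upper bound of $\widetilde\mu$ on $\mathbb C$ is exactly part $(c)$ of Lemma \ref{FockC}, and the upper bound $\mu(B(z,r))\leqslant C$ is part $(d)$ of the same lemma. So the real content is the equivalence between the lower bound of $\widetilde\mu$ on $\mathbb C$ and the existence of some $r>0$ and $C^{-1}>0$ with $\mu(B(z,r))\geqslant C^{-1}$ for all $z\in\mathbb C$.

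For the easier direction (reverse Carleson condition $\Rightarrow$ $\widetilde\mu$ bounded below), the plan is to estimate the Berezin transform from below by restricting the integral to the disk $B(z,r)$ given by the hypothesis: since $|z-w|<r$ on this disk, the Gaussian factor satisfies $\mathrm{e}^{-|z-w|^2/2}\geqslant \mathrm{e}^{-r^2/2}$, whence
\begin{align*}
\widetilde\mu(z)\;\geqslant\; \tfrac12\int_{B(z,r)}\mathrm{e}^{-|z-w|^2/2}\,d\mu(w)\;\geqslant\;\tfrac12\,\mathrm{e}^{-r^2/2}\,\mu(B(z,r))\;\geqslant\;\tfrac12\,\mathrm{e}^{-r^2/2}\,C^{-1},
\end{align*}
uniformly in $z\in\mathbb C$.

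For the converse ($\widetilde\mu$ bounded below $\Rightarrow$ reverse Carleson condition), the strategy is to absorb the tail of the Gaussian integral using Proposition \ref{LimF}. Writing $\widetilde\mu(z)\geqslant \delta>0$ for all $z$, I split
\begin{align*}
2\delta\;\leqslant\; 2\widetilde\mu(z)\;=\;\int_{B(z,R)}\mathrm{e}^{-|z-w|^2/2}\,d\mu(w)+\int_{\mathbb C\setminus B(z,R)}\mathrm{e}^{-|z-w|^2/2}\,d\mu(w).
\end{align*}
Since $\mu$ is Carleson, Proposition \ref{LimF} lets me choose $R>0$ (depending only on $\delta$, not on $z$) so that the tail integral is at most $\delta$ uniformly in $z$. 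Hence the integral over $B(z,R)$ is at least $\delta$, and since $\mathrm{e}^{-|z-w|^2/2}\leqslant 1$, I conclude $\mu(B(z,R))\geqslant \delta$, giving the reverse Carleson condition with this $R$ and constant $\delta^{-1}$.

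There is essentially no obstacle here beyond invoking Proposition \ref{LimF}: the whole argument is a uniform tail-trimming of the Gaussian integral defining $\widetilde\mu$, made possible by the fact that a Carleson measure for $\mathcal F^2$ has Gaussian tails that vanish uniformly at infinity. The only minor bookkeeping is to record that the value of $r$ produced in the reverse Carleson condition is not the same as the one which might appear in any a priori hypothesis, but is the $R$ selected through Proposition \ref{LimF}.
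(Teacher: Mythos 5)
Your proposal is correct and follows essentially the same route as the paper: the forward direction restricts the Berezin integral to $B(z,r)$ and bounds the Gaussian kernel below by $\mathrm{e}^{-r^2/2}$, and the converse splits the integral and trims the tail uniformly via Proposition \ref{LimF}, exactly as in the paper's proof. Your explicit remark that the two upper bounds are automatic from Lemma \ref{FockC} for a Carleson measure is a point the paper leaves implicit, but it is the same argument.
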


\begin{proof}
Assume  that $\mu$ satisfies the reverse Carleson condition for the Fock space. Then there is an $r>0$  and a constant $c>0$ such that $\mu(B(z, r))\geqslant c$ for $z\in \mathbb C$. It follows that
\begin{align*}
			\widetilde{\mu} (z) & = \frac{1}{2} \int_{\mathbb{C}} \mathrm{e}^ {-\frac{|z - w|^ 2}{2}} d \mu (w)
			 \geqslant \frac{1}{2} \int_{B(z, r)} \mathrm{e}^ {-\frac{|z - w|^ 2}{2}} d \mu (w) \\
			& \geqslant  \frac{\mathrm{e}^ {-\frac{r^ 2}{2}}}{2} \mu (B(z, r))
			 \geqslant \frac{c \mathrm{e}^ {-\frac{r^ 2}{2}}}{2}
\end{align*}
for all $z\in \mathbb{C}$, to get that $\widetilde{\mu}$ is bounded below on $\mathbb C$.
		
Conversely, suppose that $\widetilde{\mu}$ is bounded below, i.e.,  there is a constant $\varepsilon_0 > 0$ such that
		\begin{align*}
			\varepsilon_0 \leqslant  \widetilde{\mu} (z), \qquad z \in \mathbb{C}.
		\end{align*}
Then it is sufficient to show that there is a $\rho_0>0$ and a constant $C>0$ such that  $\mu({B(z, \rho_0)})\geqslant C$ for all $z\in \mathbb C$. To
this end, 	
 we write
		\begin{align*}
			\widetilde{\mu} (z) = \frac{1}{2} \int_{B(z, \rho)} \mathrm{e}^ {-\frac{|z - w|^ 2}{2}} d \mu (w) + \frac{1}{2} \int_{\mathbb{C} \setminus B(z, \rho)} \mathrm{e}^ {-\frac{|z - w|^ 2}{2}} d \mu (w)
		\end{align*}
for  $\rho>0$.  By Proposition \ref{LimF}, there exists some  $\rho_0$ large enough such that
		\begin{align*}
			\frac{1}{2} \int_{\mathbb{C} \setminus B(z, \rho_0)} \mathrm{e}^ {-\frac{|z - w|^ 2}{2}} d \mu (w) < \frac{\varepsilon_0}{2}, \qquad z \in \mathbb{C}.
		\end{align*}
		From this we deduce that
		\begin{align*}
			\frac{\varepsilon_0}{2} \leqslant  \frac{1}{2} \int_{B(z, \rho_0)} \mathrm{e}^ {-\frac{|z - w|^ 2}{2}} d \mu (w) \leqslant  \frac{\mu (B(z, \rho_0))}{2}
		\end{align*}
for all $ z \in \mathbb{C}$, which implies that $\mu$ satisfies the reverse Carleson condition for $\mathcal F^2$.
	\end{proof}

Combining Theorem \ref{MF}, Theorem \ref{CRCF} and  the conclusions in \cite[Theorem 1.1]{WZ}, we obtain the following  necessary and sufficient conditions for the invertibility of the Toeplitz operator $T_\varphi$ on the Fock space, where $\varphi$ is a bounded  positive function on the complex plane $\mathbb C$.

\begin{thm}\label{FM1}
Let $\varphi$ be a bounded positive function on $\mathbb C$. If $d\mu=\varphi dA$, then the following statements are equivalent:\\
$(a)$ $T_\mu=T_\varphi$ is invertible on $\mathcal F^2$;\\
$(b)$ $\widetilde{\mu}=\widetilde{\varphi}$ is bounded below on $\mathbb  C $;\\
$(c)$ There exists an $r>0$ such that  $\chi_{\{z\in \mathbb D: \varphi(z)>r\}} d\lambda$  is a reverse Carleson measure for $\mathcal F^2$;\\
$(d)$ $\mu$ is a  reverse Carleson measure for $\mathcal F^2$;\\
$(e)$ $\mu$ satisfies the  reverse Carleson condition for $\mathcal F^2$.
\end{thm}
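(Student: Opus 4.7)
The plan is to derive Theorem \ref{FM1} as a corollary of three results already in hand: Theorem \ref{MF}, Theorem \ref{CRCF}, and \cite[Theorem 1.1]{WZ}. Before invoking them I would collect some free preliminaries. Since $\varphi$ is bounded and nonnegative on $\mathbb{C}$, the operator $T_\mu = T_\varphi$ is bounded and positive on $\mathcal{F}^2$, the measure $\mu = \varphi\, dA$ is a Carleson measure for $\mathcal{F}^2$ by Proposition \ref{BF}, and consequently the Berezin transform $\widetilde{\mu} = \widetilde{\varphi}$ is automatically bounded above on $\mathbb{C}$. These observations place us inside the hypotheses of both Theorem \ref{MF} and Theorem \ref{CRCF}.

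With this setup I would carry out three independent appeals and glue them into a single chain of equivalences. First, Theorem \ref{MF} applied to the bounded operator $T_\mu$ gives $(a) \Leftrightarrow (d)$. Second, Theorem \ref{CRCF} applied to the Carleson measure $\mu$ says that $\mu$ satisfies the reverse Carleson condition if and only if $\widetilde{\mu}$ is bounded above and below; since the upper bound on $\widetilde{\mu}$ is already free, this collapses to $(b) \Leftrightarrow (e)$. Third, \cite[Theorem 1.1]{WZ}, which is exactly the Fock-space analogue of the Bergman-space theorem of Luecking and of Zhao--Zheng cited in Theorem \ref{M1}, directly supplies $(a) \Leftrightarrow (b) \Leftrightarrow (c)$ for bounded positive symbols on $\mathcal{F}^2$. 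Concatenating these three chains yields the full five-way equivalence.

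I do not expect a significant obstacle here: the theorem is essentially a corollary of the preceding infrastructure, not a result requiring a new idea. The only bookkeeping worth double-checking is the compatibility of conventions for the threshold measure in $(c)$ between \cite{WZ} and the definition of reverse Carleson measure for $\mathcal{F}^2$ fixed in Section \ref{2}, so that the statement ``$\chi_{\{\varphi > r\}} d\lambda$ is a reverse Carleson measure for $\mathcal{F}^2$'' matches the conclusion actually furnished by the Wang--Zhao theorem. Once that cross-reference is made explicit, the assembly of implications is routine and the proof terminates.
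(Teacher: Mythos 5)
Your proposal is correct and follows exactly the route the paper takes: the paper derives Theorem \ref{FM1} by combining Theorem \ref{MF} for $(a)\Leftrightarrow(d)$, Theorem \ref{CRCF} for $(b)\Leftrightarrow(e)$ (using that $\widetilde{\mu}$ is automatically bounded above for a bounded symbol), and \cite[Theorem 1.1]{WZ} for $(a)\Leftrightarrow(b)\Leftrightarrow(c)$. Your attention to the cross-reference in $(c)$ is reasonable bookkeeping but introduces no new content beyond what the paper does.
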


In view of the discussions for Bergman-Toeplitz operators in the previous section, it is  not difficult to guess that Theorem \ref{FM1} does not hold for Toeplitz operators induced by general positive measures on the Fock space.  In fact, using the conclusions in Theorems \ref{MF} and \ref{CRCF} and following the idea of the construction in Proposition \ref{proposition 1}, it is natural for us to consider the measure of the form:
\begin{align*}
	\mu = \sum_{k = 1}^ {\infty} {\delta_{a_k}},
\end{align*}
where $\delta_{a_k}$ is the point mass concentrated at $a_k$. Furthermore, one can extend Propositions \ref{proposition 1} and \ref{proposition 2}
to the Fock-space setting without many  difficulties.

 In order to answer  Question \ref{Q3}, it suffices to consider the measure defined above with $\{ a_k \}_{k = 1}^ {\infty}$  an $r$-lattice for $\mathcal{F}^ 2$, which was investigated by Zhuo and Lou in Section 1 of \cite{ZL}. Let us quote their result as the following lemma.

\begin{lem}\label{muF}
Let $\{a_k\}_{k=1}^\infty$ be the sequence given in \emph{(\ref{Lambda})}, that is,
$$\Lambda(0, r, \mathrm{i}r)=\{a_k\}_{k=1}^\infty.$$		
Define the positive Borel measure
\begin{align*}
		\mu = \sum_{k = 1}^ {\infty} {\delta_{a_k}},
\end{align*}
where $\delta_{a_k}$ is the point mass concentrated at $a_k$.
Then $\mu$ satisfies the reverse Carleson condition for the Fock space $\mathcal F^2$, but $\mu$ is not a reverse Carleson measure for $\mathcal F^2$ provided $r\geqslant \sqrt{2\pi}$.
\end{lem}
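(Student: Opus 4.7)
The plan is to establish the two assertions of the lemma separately: the first is a direct geometric bound on lattice counts, and the second reduces to the failure of the lattice $\Lambda(0, r, \mathrm{i}r)$ to be a sampling sequence for $\mathcal{F}^2$ whenever $r \geqslant \sqrt{2\pi}$.

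For the reverse Carleson condition (\ref{RCCF}), I would fix the radius $R = r\sqrt{2}$, which is the diameter of a fundamental square of the lattice. Given any $z \in \mathbb{C}$, the point $z$ lies in some fundamental square and is therefore within distance at most $r/\sqrt{2} < R$ of one of its corners, yielding $\mu(B(z, R)) \geqslant 1$. For the matching upper bound, a standard packing estimate shows that any Euclidean ball of radius $R$ contains at most a uniformly bounded number of points of $\Lambda(0, r, \mathrm{i}r)$ (explicitly, at most $(\lceil 2R/r \rceil + 1)^2$), so $1 \leqslant \mu(B(z, R)) \leqslant C$ uniformly in $z \in \mathbb{C}$, which is (\ref{RCCF}).

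For the second assertion, the strategy is to rephrase the reverse Carleson property as a sampling inequality. A direct computation gives
\begin{align*}
\int_{\mathbb{C}} |f(w)|^2 \mathrm{e}^{-|w|^2/2} d\mu(w) = \sum_{k=1}^{\infty} |f(a_k)|^2 \mathrm{e}^{-|a_k|^2/2},
\end{align*}
while $\int_{\mathbb{C}} |f|^2 \mathrm{e}^{-|w|^2/2} dA = 2\|f\|_{\mathcal{F}^2}^2$. Hence $\mu$ is a reverse Carleson measure for $\mathcal{F}^2$ if and only if $\{a_k\}_{k=1}^\infty$ is a sampling sequence for the Fock space. To violate sampling in the strict regime $r > \sqrt{2\pi}$, I would produce the Weierstrass sigma function $\sigma_r$ associated with $\Lambda(0, r, \mathrm{i}r)$, whose classical growth estimate $|\sigma_r(z)|^2 \lesssim \mathrm{e}^{\pi|z|^2/r^2}(1+|z|)^M$ places $\sigma_r$ in $\mathcal{F}^2$ exactly when $\pi/r^2 < 1/2$, and which vanishes identically on the lattice, instantly destroying the sampling lower bound.

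The main obstacle is the borderline value $r = \sqrt{2\pi}$, at which $\sigma_r$ just fails to lie in $\mathcal{F}^2$ and, in fact, no nonzero element of $\mathcal{F}^2$ vanishes on the entire lattice (the lattice is a uniqueness set at criticality). Here I would invoke the sharp Seip--Wallst\'en density criterion for Fock sampling --- a uniformly discrete sequence in $\mathbb{C}$ is sampling for $\mathcal{F}^2$ if and only if its lower uniform density strictly exceeds $1/(2\pi)$, a threshold that $\Lambda(0, \sqrt{2\pi}, \mathrm{i}\sqrt{2\pi})$ meets only with equality --- or else build a sequence $\{f_n\} \subset \mathcal{F}^2$ (for example, by dividing $\sigma_{r_n}$ for $r_n \downarrow \sqrt{2\pi}$ by successively more linear factors near the origin) and verify that the ratio $\sum_k |f_n(a_k)|^2 \mathrm{e}^{-|a_k|^2/2} / \|f_n\|_{\mathcal{F}^2}^2$ tends to zero. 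Once non-sampling is secured at $r \geqslant \sqrt{2\pi}$, the reformulation from the previous paragraph immediately yields that $\mu$ fails to be a reverse Carleson measure for $\mathcal{F}^2$.
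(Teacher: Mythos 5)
Your argument is essentially correct, but there is nothing in the paper to compare it against: the authors explicitly \emph{quote} this statement from Section~1 of \cite{ZL} (Zhuo--Lou) and give no proof of their own, so you have supplied a proof where the paper supplies a citation. Your two halves are sound. The reverse Carleson condition follows from the elementary lattice geometry exactly as you say (every $z$ lies within $r/\sqrt{2}$ of a lattice point, and a packing bound gives the upper estimate); note this half needs no restriction on $r$. The second half correctly reduces the reverse Carleson measure property to the sampling property of $\Lambda(0,r,\mathrm{i}r)$ via the identity $\int_{\mathbb C}|f|^2\mathrm{e}^{-|w|^2/2}d\mu=\sum_k|f(a_k)|^2\mathrm{e}^{-|a_k|^2/2}$, and the Weierstrass sigma function disposes of the strict case $r>\sqrt{2\pi}$. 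The only fragile point is the critical case $r=\sqrt{2\pi}$: your constructive fallback (dividing $\sigma_{r_n}$ for $r_n\downarrow\sqrt{2\pi}$ by linear factors) is not workable as sketched, since $\sigma_{r_n}$ vanishes on the lattice $\Lambda(0,r_n,\mathrm{i}r_n)$ rather than on the critical lattice, so those quotients do not obviously have small sampling ratio on $\{a_k\}$; at criticality the lattice is a uniqueness set and one genuinely needs either the sharp Seip--Wallst\'en density theorem (lower Beurling density must \emph{strictly} exceed $1/(2\pi)$ for the weight $\mathrm{e}^{-|z|^2/2}$), which you correctly invoke as your primary route, or a more carefully built extremal sequence. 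Provided you rest the critical case on Seip--Wallst\'en, the proof is complete and is almost certainly the same mechanism underlying the cited result in \cite{ZL}.
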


Now we are ready to give a negative answer to Question \ref{Q3}. In fact, combining Theorem \ref{MF}, Theorem \ref{CRCF} and Lemma \ref{muF}, we obtain the following analogue of Theorem \ref{CE} on the Fock space.

\begin{thm}\label{TF}
Let $r\geqslant \sqrt{2\pi}$ and $\Lambda(0, r, \mathrm{i} r)=\{a_k\}_{k=1}^\infty$. Then $\nu=\sum\limits_{k=1}^\infty \delta_{a_k}$
is not a reverse Carleson measure  for the Fock space $\mathcal F^2$ but satisfies the condition (M) and the reverse Carleson condition for $\mathcal F^2$. Consequently,
the Berezin transform of $\nu$ is bounded below on $\mathbb C$ but the corresponding  Toeplitz operator $T_\nu$ is not invertible on the Fock space $\mathcal F^2$.
\end{thm}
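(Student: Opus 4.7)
The proof will be a direct assembly of the machinery already established. I would proceed in three stages: verify condition (M) for $\nu$; invoke Lemma \ref{muF} to obtain the two key structural properties of $\nu$; and then read off the conclusion about the Berezin transform and $T_\nu$ via Theorems \ref{MF} and \ref{CRCF}.

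First, I would check condition (M). For the lattice $\{a_k\} = \{mr + \mathrm{i} n r : m, n \in \mathbb{Z}\}$, the sum
\begin{align*}
\sum_{k=1}^\infty \bigl|K_z^{(\mathcal F^2)}(a_k)\bigr|^2 \mathrm{e}^{-\frac{|a_k|^2}{2}} = \sum_{k=1}^\infty \mathrm{e}^{\mathrm{Re}(\overline{z} a_k) - \frac{|a_k|^2}{2}} = \mathrm{e}^{\frac{|z|^2}{2}} \sum_{k=1}^\infty \mathrm{e}^{-\frac{|a_k - z|^2}{2}}
\end{align*}
converges for each $z \in \mathbb C$ by comparison with a two-dimensional Gaussian lattice sum, so $\nu$ satisfies condition (M).

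Next, Lemma \ref{muF} provides precisely the two needed facts: since $r \geqslant \sqrt{2 \pi}$, the measure $\nu$ satisfies the reverse Carleson condition (\ref{RCCF}), yet $\nu$ is \emph{not} a reverse Carleson measure for $\mathcal F^2$. In particular, the upper bound $\nu(B(z, r)) \leqslant C$ inherent in (\ref{RCCF}) together with Lemma \ref{FockC}(d) shows that $\nu$ is a Carleson measure for $\mathcal F^2$. Consequently, by Proposition \ref{BF}, the Toeplitz operator $T_\nu$ is bounded on $\mathcal F^2$.

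Finally, I would combine these with the structural theorems. Because $\nu$ is a Carleson measure satisfying the reverse Carleson condition, Theorem \ref{CRCF} gives that $\widetilde{\nu}$ is bounded above and below on $\mathbb C$; in particular, $\widetilde{\nu}$ is bounded below. On the other hand, because $T_\nu$ is bounded but $\nu$ fails to be a reverse Carleson measure for $\mathcal F^2$, Theorem \ref{MF} forces $T_\nu$ to be non-invertible on $\mathcal F^2$. This produces the desired counterexample and gives a negative answer to Question \ref{Q3}.

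There is no genuine obstacle here: all the hard work has been done in Lemma \ref{muF} (which supplies the key failure of the reverse Carleson property for the lattice-point-mass measure) and in Theorems \ref{MF} and \ref{CRCF} (which translate between the reverse Carleson condition, the reverse Carleson measure property, the Berezin transform being bounded below, and the invertibility of $T_\nu$). The only mildly technical point is the verification of condition (M), which is handled by the Gaussian lattice sum computation above.
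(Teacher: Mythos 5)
Your proposal is correct and follows essentially the same route as the paper: invoke Lemma \ref{muF} for the two structural properties of $\nu$, then translate via Theorems \ref{CRCF} and \ref{MF}. The only (immaterial) difference is that you verify condition (M) by a direct Gaussian lattice sum, whereas the paper deduces it from the Carleson measure property via Lemma \ref{FockC}; both are valid, and your explicit check that $T_\nu$ is bounded before applying Theorem \ref{MF} is a point the paper leaves implicit.
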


\begin{proof}
Using Lemma \ref{muF}, we have that $\nu$ satisfies the reverse Carleson condition for the Fock space $\mathcal F^2$, so $\nu$ is a Carleson measure for $\mathcal F^2$.  It follows from Lemma \ref{FockC} that
\begin{align*}
	\int_{\mathbb{C}} |K_z^ {(\mathcal{F}^ 2)} (w)|^ 2 \mathrm{e}^ {-\frac{|w|^ 2}{2}} d \nu (w) = \| K_z^ {(\mathcal{F}^ 2)} \|_{\mathcal{F}^ 2}^ 2 \int_{\mathbb{C}} |k_z^ {(\mathcal{F}^ 2)} (w)|^ 2 \mathrm{e}^ {-\frac{|w|^ 2}{2}} d \nu (w) < +\infty
\end{align*}
for each $z \in \mathbb{C}$, implying that $\nu$ satisfies condition (M).

By Theorems \ref{MF} and \ref{CRCF}, we easily deduce that $\widetilde{\nu}$ is bounded below on the complex plane but $T_{\nu}$ is not invertible
on the Fock space $\mathcal F^2$. This completes the proof of Theorem \ref{TF}.
\end{proof}

\subsection*{Acknowledgment} The first author was supported by NSFC (grant number: 12161141013) and Hunan Provincial Natural Science Foundation of China (grant number: 2022JJ30233). The third author was supported by  NSFC (grant number: 12371125) and the Fundamental Research Funds for the Central Universities (grant number: 2022CDJJCLK002).

\end{document}